\newtheorem{theorem}{Theorem}
\newtheorem{lemma}{Lemma}
\newtheorem*{definition}{Definition}
\newtheorem{conjecture}{Conjecture}
\renewcommand*\backref[1]{}
\renewcommand*\backrefalt[4]{ \ifcase #1 \or (cited on page #2) \else (cited on pages #2) \fi}
\newcommand{\be}{\begin{equation}}
\newcommand{\ee}{\end{equation}}
\newcommand{\bea}{\begin{eqnarray}}
\newcommand{\eea}{\end{eqnarray}}
\newcommand{\vs}{\vspace{0.5cm}}
\def\XXint#1#2#3{{\setbox0=\hbox{$#1{#2#3}{\int}$ }
\vcenter{\hbox{$#2#3$ }}\kern-.6\wd0}}
\begin{document}

\title[Streets-Tian Conjecture holds for 2-step solvmanifolds]{Streets-Tian Conjecture holds for 2-step solvmanifolds}

\author{Shuwen Chen}
\address{Shuwen Chen. School of Mathematical Sciences, Chongqing Normal University, Chongqing 401331, China}
\email{{3153017458@qq.com}}\thanks{Chen is supported by Chongqing graduate student research grant No.\,CYB240227. The corresponding author Zheng is partially supported by NSFC grants 12141101 and 12471039, by Chongqing grant cstc2021ycjh-bgzxm0139, by Chongqing Normal University grant 24XLB026, and is supported by the 111 Project D21024.}

\author{Fangyang Zheng}
\address{Fangyang Zheng. School of Mathematical Sciences, Chongqing Normal University, Chongqing 401331, China}
\email{20190045@cqnu.edu.cn; franciszheng@yahoo.com} \thanks{}

\subjclass[2020]{53C55 (primary)}
\keywords{Streets-Tian Conjecture, Hermitian-symplectic metrics, 2-step solvable groups, Hermitian Lie algebras }

\begin{abstract}
A Hermitian-symplectic metric is a Hermitian metric whose K\"ahler form is given by the $(1,1)$-part of a closed $2$-form. Streets-Tian Conjecture states that a compact complex manifold admitting a Hermitian-symplectic metric must be K\"ahlerian (i.e., admitting a K\"ahler metric).  The conjecture is known to be true in dimension $2$ but is still open in dimensions $3$ or higher. In this article, we confirm the conjecture for all 2-step solvmanifolds, namely, compact quotients of 2-step solvable Lie groups by discrete subgroups. In the proofs, we adopted a method of using special {\em non-unitary} frames, which enabled us to squeeze out some hidden symmetries to make the proof go through. Hopefully the technique could be further applied. 
\end{abstract}

\maketitle

\tableofcontents

\section{Introduction and statement of results}\label{intro}

Streets and Tian introduced in \cite{ST} the notion of {\em Hermitian-symplectic metric,} namely a Hermitian metric $g$ on a compact complex manifold  $M^n$ whose K\"ahler form $\omega$ is the $(1,1)$-part of a closed $2$-form. That is,  there exists a global $(2,0)$-form $\alpha$ on $M^n$ so that $\Omega = \alpha + \omega + \overline{\alpha}$ is closed. It is equivalent to the existence of a symplectic form (i.e., non-degenerate closed $2$-form) $\Omega$ such that $\Omega (x, Jx)>0$ for any non-zero tangent vector $x$. So a Hermitian-symplectic structure is also called a {\em symplectic structure taming a complex structure $J$} (\cite{EFV}). 

Hermitian-symplectic metrics are natural and interesting. It is an obvious way to mix a Hermitian structure with a symplectic one. The definition immediately implies that any Hermitian-symplectic metric would satisfy $\partial \overline{\partial} \omega =0$, namely, it is {\em pluriclosed,}  a type of special Hermitian metrics that are extensively studied (see for instance \cite{FinoTomassini} and the references therein). Hermitian-symplectic metrics also play an important role in the Hermitian curvature flow theory developed by Streets and Tian \cite{Streets, ST, ST11, ST12}. For instance, Ye \cite{Ye} shows that the Hermitian-symplectic  property is preserved under the pluriclosed flow of Streets and Tian. Hermitian-symplectic metrics enjoy other nice properties, including that it is stable under small deformations (\cite[Ch.12]{OV}), and by \cite[Lemma 1]{YZZ} its presence guarantees the existence of a {\em strongly Gauduchon} metric, which means a Hermitian metric whose K\"ahler form $\omega$ satisfies $\partial \omega^{n-1} = \overline{\partial}\Phi$ where $\Phi$ is a global $(n,n-2)$-form on the $n$-dimensional manifold. An interesting and important open question in non-K\"ahler geometry is the following:

\begin{conjecture}[{\bf Streets-Tian \cite{ST}}]
If a compact complex manifold admits a Hermitian-symplectic metric, then it must admit a K\"ahler metric.
\end{conjecture}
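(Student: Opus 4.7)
The plan is to reduce the conjecture, for 2-step solvmanifolds, to a finite-dimensional computation on the underlying Lie algebra. First I would invoke a symmetrization step: by averaging over the compact quotient $M = G/\Gamma$ (Belgun's trick in the nilmanifold case, and its extensions to the solvable setting under the usual completely-solvable / Mostow-type hypotheses ensuring that de Rham cohomology is computed by left-invariant forms), the existence of any Hermitian-symplectic metric on $M$ produces a left-invariant one, and likewise a K\"ahler metric on $M$ would descend to a left-invariant K\"ahler metric. The problem becomes the following algebraic one: given a 2-step solvable real Lie algebra $\mathfrak{g}$ with an integrable complex structure $J$, a $(2,0)$-form $\alpha$, and a positive $(1,1)$-form $\omega$ on $\mathfrak{g}$ with $d(\alpha + \omega + \overline{\alpha}) = 0$, show that there is a closed positive $(1,1)$-form on $(\mathfrak{g}, J)$ — ideally by showing $d\omega = 0$ outright.

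The decisive tool announced in the abstract is the use of a special non-unitary frame. Concretely, let $\mathfrak{n} = [\mathfrak{g}, \mathfrak{g}]$, which is abelian by the 2-step hypothesis, and consider its complex intersection $\mathfrak{n}^{\mathbb{C}} \cap \mathfrak{g}^{1,0}$ inside $\mathfrak{g}^{1,0}$. I would pick a $\mathbb{C}$-basis $\{\xi_1,\dots,\xi_n\}$ of $\mathfrak{g}^{1,0}$ adapted to this filtration rather than to the inner product $g$: some of the $\xi_j$ span the vertical part, and the remaining horizontal ones are chosen so that their adjoint action on $\mathfrak{n}^{\mathbb{C}} \cap \mathfrak{g}^{1,0}$ is in upper-triangular (or Jordan) form. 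In this frame the metric coefficients $g_{j\bar k}$ are no longer $\tfrac12 \delta_{jk}$, but the structure constants entering $d\xi^\ell = -\tfrac12 C^\ell_{jk}\,\xi^j \wedge \xi^k - C^\ell_{j\bar k}\,\xi^j \wedge \overline{\xi}^k$ become sparse, exposing symmetries that are hidden in the unitary gauge.

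With the frame fixed, I would split the closedness $d(\alpha+\omega+\overline{\alpha})=0$ into its type components to obtain $\partial \alpha = 0$ and $\partial\omega = -\overline{\partial}\alpha$. Combining these with $J^2 = -1$ and the Jacobi identities for $C^\ell_{jk}$ and $C^\ell_{j\bar k}$ gives a system of linear and quadratic relations on $(g_{j\bar k}, \alpha, C)$. The strategy is to contract this system against $\omega$ itself (and against the metric), using the unimodularity of $\mathfrak{g}$ forced by the existence of a lattice $\Gamma$ to cancel trace-type terms, and to pair the $\overline{\partial}$-exactness of $\partial\omega$ against the positivity of $g_{j\bar k}$ in the style of a Hodge-theoretic identity at the Lie-algebra level. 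If the non-unitary frame is well chosen, each mixed component of $\partial\omega$ should collapse term by term, leaving $\partial\omega = 0$ and hence $d\omega = 0$.

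I expect the main obstacle to lie in the interaction between $J$ and the adjoint action of $\mathfrak{g}/\mathfrak{n}$ on $\mathfrak{n}$. In the nilpotent sub-case this action vanishes and the argument is essentially routine; in the genuinely solvable case the adjoint action is non-trivial and need not commute with $J$, which permits it to redistribute "twisting" between the $C^\ell_{jk}$ and $C^\ell_{j\bar k}$ parts of the structure constants. The purpose of the non-unitary frame is precisely to absorb this non-commutativity into the metric tensor $g_{j\bar k}$ rather than into $C$, so that a single linear-algebraic identity — presumably comparing traces on the $J$-invariant and $J$-anti-invariant blocks of the adjoint representation, together with unimodularity — suffices to kill the would-be contribution of $\alpha$. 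Pinpointing and exploiting that identity is, I anticipate, the heart of the proof.
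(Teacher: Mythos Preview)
Your proposal addresses only the 2-step solvmanifold case, which is what the paper actually proves (its Theorem~\ref{thm}); the general conjecture is stated in the paper as open and is not proved there, so strictly speaking neither you nor the paper proves the statement in question.

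Within the 2-step case there is a genuine gap in your plan. You aim to show that the given Hermitian-symplectic metric $g$ itself satisfies $d\omega = 0$, and your whole endgame is ``each mixed component of $\partial\omega$ should collapse term by term.'' The paper shows this is false in general: after all reductions the structure equations take the form $d\varphi_i = \varphi_i(\sigma_i - \overline{\sigma}_i) - p_i\,\sigma_i\overline{\sigma}_i$, and one computes
\[
d\omega \;=\; -\sqrt{-1}\sum_{i=1}^r (\overline{p}_i\varphi_i + p_i\overline{\varphi}_i)\,\sigma_i\overline{\sigma}_i,
\]
which is nonzero whenever some $p_i\neq 0$. The paper instead constructs a \emph{new} K\"ahler metric via the coframe change $\psi_i = \varphi_i + p_i\sigma_i$ and then checks $d(\psi_i\overline{\psi}_i)=0$ directly. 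Your outline contains no mechanism for producing a different metric.

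Your sketch also omits the two ingredients that carry the actual weight. First, the paper uses the fact that on a compact Hermitian-symplectic manifold any $d$-exact non-negative $(p,p)$-form vanishes; applied to $(d\varphi_\alpha)^k$ this forces $D^{\ast}_{\alpha\ast}=0$ for the middle block of the admissible frame, a constraint not extractable from the Jacobi and Hermitian-symplectic identities alone. Second, the decisive vanishing $Z_a=0$ and $C_x+D_x=0$ come from combining the Hermitian-symplectic system with Jacobi to obtain $\,^t\!Z\overline{Z}+(C+D)^{\ast}(C+D)=[C^{\ast},C]$ and taking trace; your proposed ``contract against $\omega$, use unimodularity to cancel trace terms'' does not reproduce this identity, and unimodularity is in fact never invoked in the paper's argument. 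A minor side point: the averaging step needs only the bi-invariant measure guaranteed by the existence of a cocompact lattice, not any completely-solvable or Mostow-type hypothesis.
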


The conjecture is known to be true in complex dimension $2$ (\cite{LiZhang, ST}), but open in higher dimensions in general. Complex dimension $2$ (that is, real dimension $4$) is rather special. See also \cite{Donaldson} for a related more general conjecture in real dimension $4$. In complex dimension $3$ or higher, the conjecture is only known in some special cases, including the following:
\begin{itemize}
\item twistor spaces: Verbitsky \cite{Verbitsky} showed that any non-K\"ahlerian twistor space does not admit any pluriclosed metric. 
\item Fujiki ${\mathcal C}$ class manifolds: namely compact complex manifolds bimeromorphic to compact K\"ahler manifolds. Chiose \cite{Chiose} proved that any non-K\"ahlerian Fujiki ${\mathcal C}$ class manifold does not admit any pluriclosed metric. 
\item the special non-K\"ahler Calabi-Yau threefolds of Fu-Li-Yau \cite{FuLiYau} via conifold transition: they showed that such threefolds do not admit any pluriclosed metric.
\item compact Chern flat manifolds: Di Scala-Lauret-Vezzoni \cite[Proposition 3.3]{DLV} showed that such spaces do not admit any pluriclosed metric. See also \cite[Cor 3.2]{FKV}.
\item Oeljeklaus-Toma manifolds: Fino-Kasuya-Vezzoni showed that \cite{FKV} showed that such manifolds do not admit any pluriclosed metric.
\item Vaisman manifolds: Angella-Otiman \cite{AOt} showed that any (non-K\"ahler) Vaisman manifold does not admit Hermitian-symplectic metrics.
\item non-balanced Bismut torsion parallel (BTP) manifolds: Guo and the second named author \cite{GuoZ2} showed that the conjecture holds.
\end{itemize}

Recall that a {\em Vaisman manifold} is a special type of locally conformally K\"ahler manifolds where the Lee form is parallel under the Levi-Civita connection. We refer the readers to the recent book \cite{OV} by Ornea and Verbitsky, which forms an encyclopedia for locally conformally K\"ahler and Vaisman manifolds.  A Hermitian metric is said to be {\em balanced} if its K\"ahler form $\omega$ satisfies $d(\omega^{n-1})=0$.  BTP manifolds are Hermitian manifolds whose Bismut connection (\cite{Bismut}) has parallel torsion. We refer the readers to \cite{ZhaoZ22, ZhaoZ24} for more discussions on this interesting type of special Hermitian manifolds. Andrada and Villacampa \cite{AndV} showed that  Vasiman manifolds are always non-balanced BTP. The latter also contains other examples such as {\em Bismut K\"ahler-like} manifolds (including all Bismut flat manifolds) and complex nilmanifolds that are BTP, when the dimension is $3$ or higher. See \cite{YZZ, ZZ-Crelle, ZZ-JGP} for more discussions on such metrics.

Next let us consider a large and interesting class of compact complex manifolds: {\em Lie-complex manifolds}. This means a compact complex manifold $M^n= G/\Gamma$ where $G$ is a Lie group and $\Gamma$ is a discrete subgroup, with the complex structure $J$ (when lifted onto $G$) being left-invariant. When $G$ is nilpotent or solvable, $M$ will be called a complex nilmanifold or complex solvmanifold, respectively. Regarding Streets-Tian Conjecture on Lie-complex manifolds, an important supporting evidence is the theorem of Enrietti-Fino-Vezzoni \cite{EFV}, which confirmed the conjecture for all complex nilmanifolds.  See also \cite{AN} for an important supplement to this.  For complex sovmanifolds, namely, when $G$ is solvable, the conjecture is confirmed in the following special situations (below let us denote by ${\mathfrak g}$ the Lie algebra of $G$):

\begin{itemize}
\item when ${\mathfrak g}$ is completely solvable: by Fino-Kasuya \cite{FK}.
\item when ${\mathfrak g}$ contains a $J$-invariant nilpotent complement: by Fino-Kasuya-Vezzoni in \cite{FKV}.
\item when $J$ is abelian: by Fino-Kasuya-Vezzoni \cite{FKV}.
\item when ${\mathfrak g}$ is almost abelian: by Fino-Kasuya-Vezzoni \cite{FKV} and Fino-Paradiso \cite{FP4}. See also \cite{GuoZ2} for an alternative proof. 
 \item when ${\mathfrak g}$ contains an abelian ideal ${\mathfrak a}$ of codimension $2$ with $J{\mathfrak a} = {\mathfrak a}$: by Guo-Zheng \cite{GuoZ2}.
 \item when ${\mathfrak g}$ contains an abelian ideal ${\mathfrak a}$ of codimension $2$ with $J{\mathfrak a} \neq {\mathfrak a}$: by Cao-Zheng \cite{CaoZ2}.
\end{itemize}

Recall that ${\mathfrak g}$ is {\em completely solvable} if $\forall\, x\in {\mathfrak g}$, $\mbox{ad}_x$ has only real eigenvalues. A subalgebra ${\mathfrak c}$ of a solvable Lie algebra ${\mathfrak g}$ is called an {\em nilpotent complement}, if  ${\mathfrak c}+{\mathfrak n}={\mathfrak g}$ (not necessarily direct sum) where ${\mathfrak n}$ is the nilradical of ${\mathfrak g}$. A complex structure $J$ on ${\mathfrak g}$ is said to be {\em abelian,} if $[Jx,Jy]=[x,y]$ holds $\forall\, x,y\in {\mathfrak g}$. In this case ${\mathfrak g}$ is necessarily $2$-step solvable \cite{ABD}.

A Lie algebra ${\mathfrak g}$ is {\em almost abelian,} if it contains an abelian ideal of codimension $1$. Note that such a Lie algebra is always $2$-step solvable. Almost abelian groups have been actively studied in recent years, see for instance \cite{AO, AL, Bock, CM, FP1, FP2, GuoZ, LW, Paradiso}. In the almost abelian case, \cite[Theorem 1.4]{FKV} confirmed the conjecture under the additional assumption that either $\dim {\mathfrak g}=6$ or ${\mathfrak g}$ is not of type (I), meaning that there exists $x\in {\mathfrak g}$ so that $\mbox{ad}_x$ has an eigenvalue with non-zero real part. In \cite{FP4} the additional assumption was removed. 

Lie algebras containing abelian ideals of codimension $2$ are natural generalizations to the almost abelian case, and they are always solvable of step at most $3$. The $J$-invariant case is much easier algebraically speaking, and it was done in \cite{GuoZ2}, while the  $J{\mathfrak a} \neq {\mathfrak a}$ was done later in \cite{CaoZ2}, utilizing our previous computations in \cite{CaoZ} which confirmed the Fino-Vezzoni Conjecture on such Lie algebras.    

The main purpose of this article is to confirm Streets-Tian Conjecture in the $2$-step solvable case, which will include the almost abelian or abelian $J$ case as special circumstances. Recall that a Lie algebra ${\mathfrak g}$ is said to be $2$-step solvable, if it is not abelian, but its commutator  ${\mathfrak g}':= [{\mathfrak g}, {\mathfrak g}]$ is abelian. In \cite{FSwann, FSwann2}, Freibert and Swann give a systematic treatment on the Hermitian geometry of $2$-step solvable Lie algebras. In particular, they shows that for such Hermitian Lie algebras of {\em pure type,} which means either $J{\mathfrak g}'\cap {\mathfrak g}'=0$,  or $J{\mathfrak g}'={\mathfrak g}'$, or $J{\mathfrak g}'+{\mathfrak g}'={\mathfrak g}$, then the Fino-Vezzoni Conjecture holds. The latter (\cite{FinoVezzoni15, FinoVezzoni}) states that any compact complex manifold admitting a balanced and a pluriclosed metric must be K\"ahlerian. Despite the apparent resemblance between Streets-Tian Conjecture and Fino-Vezzoni Conjecture, these two conjectures do not imply one another.

The following is the main result of this article, which confirms confirm Streets-Tian Conjecture for all Lie-complex manifolds that are $2$-step solvable:

\begin{theorem} \label{thm}
Let $M^n=G/\Gamma $ be a compact complex manifold which is the quotient of a Lie group by a discrete subgroup and the complex structure (when lifted onto $G$) is left-invariant, such that $G$ is $2$-step solvable. If $M$ admits a Hermitian-symplectic metric, then it must admit a (left-invariant) K\"ahler metric.
\end{theorem}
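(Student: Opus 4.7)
The plan is to reduce the statement to an algebraic problem on the Lie algebra $\mathfrak{g}$ of $G$. By a Belgun-type symmetrization argument (standard in this setting, with the averaging performed over the fibres of a suitable fibration), one upgrades the given Hermitian-symplectic structure to a left-invariant one, producing invariant forms $\alpha\in \Lambda^{2,0}\mathfrak{g}^{\ast}$ and an invariant Hermitian metric $\omega$ so that $\Omega=\alpha+\omega+\bar\alpha$ is $d$-closed on $\mathfrak{g}^{\ast}$. The goal then becomes showing that $\mathfrak{g}$ must be abelian, in which case every invariant Hermitian metric is automatically K\"ahler (the Chevalley-Eilenberg differential vanishes), yielding the required left-invariant K\"ahler metric on $M$.

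Next I would set up adapted frames. Since $\mathfrak{g}$ is 2-step solvable, $\mathfrak{g}':=[\mathfrak{g},\mathfrak{g}]$ is an abelian ideal, and the position of $J$ relative to $\mathfrak{g}'$ splits the problem into the Freibert-Swann pure types ($J\mathfrak{g}'=\mathfrak{g}'$, $J\mathfrak{g}'\cap\mathfrak{g}'=0$, $J\mathfrak{g}'+\mathfrak{g}'=\mathfrak{g}$) together with mixed cases. Rather than working with a unitary frame for $\omega$, I would follow the hint of the abstract and choose a \emph{non-unitary} basis $\{e_1,\ldots,e_n\}$ of $\mathfrak{g}^{1,0}$ adapted to the filtration $(\mathfrak{g}'\cap J\mathfrak{g}')\subset(\mathfrak{g}'+J\mathfrak{g}')\subset\mathfrak{g}^{\mathbb{C}}$ and to the eigenspace decomposition of a generic $\mathrm{ad}_x$. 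In such a frame $\omega$ has nontrivial Hermitian matrix entries $\omega_{i\bar j}$, but in exchange the structure constants $C^{k}_{ij}$ assume a sparse, block form that makes the 2-step abelian condition $[\mathfrak{g}',\mathfrak{g}']=0$ visible and exploitable.

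The condition $d\Omega=0$ decomposes by type as
\begin{equation*}
\partial\alpha=0,\qquad \bar\partial\alpha+\partial\omega=0,\qquad \bar\partial\omega+\partial\bar\alpha=0,\qquad \bar\partial\bar\alpha=0,
\end{equation*}
and expanding each via the Chevalley-Eilenberg formula yields a linear system relating the components of $\alpha$ and $\omega$ to the $C^{k}_{ij}$. The crux of the argument is to contract this system against the Hermitian matrix of $\omega$ in the non-unitary frame so as to produce a positive trace identity. Positivity of $\omega$ combined with the vanishing of certain brackets from the 2-step hypothesis should force all $C^{k}_{ij}$ encoding the non-abelian part of $\mathfrak{g}$ to be zero, collapsing $\mathfrak{g}$ to an abelian algebra. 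This would generalize and unify the case analyses of Fino-Kasuya-Vezzoni, Fino-Paradiso, Guo-Zheng and Cao-Zheng, which each handled a specific position of $J\mathfrak{g}'$ inside $\mathfrak{g}$.

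The main obstacle I anticipate is producing a \emph{uniform} trace identity across every case of how $J\mathfrak{g}'$ sits inside $\mathfrak{g}$. A unitary frame washes out precisely the block structure induced by the 2-step condition, while a frame adapted only to $\mathfrak{g}'$ loses control of the positivity of $\omega$; the success of the scheme depends on picking a frame that simultaneously respects $J$, the filtration by $\mathfrak{g}'\cap J\mathfrak{g}'$ and $\mathfrak{g}'+J\mathfrak{g}'$, and enough of $\omega$ to feed into the trace identity. Completing the argument in the genuinely mixed cases, where none of the pure-type simplifications are available, is where I expect most of the technical work to lie and where the non-unitary frame technique advertised in the abstract has to earn its keep.
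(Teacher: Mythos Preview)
Your reduction to the Lie algebra via averaging and your instinct to use a non-unitary frame adapted to the filtration $\mathfrak{g}'_J\subset \mathfrak{g}'+J\mathfrak{g}'\subset \mathfrak{g}$ are both correct and match the paper's setup. However, your stated goal---to show that $\mathfrak{g}$ must be \emph{abelian}---is wrong, and this is a fatal gap. There exist non-abelian $2$-step solvable unimodular Lie algebras $(\mathfrak{g},J)$ carrying left-invariant K\"ahler metrics; the paper's own structure equations at the end of the proof, namely $d\varphi_i=\varphi_i(\sigma_i-\bar\sigma_i)-p_i\sigma_i\bar\sigma_i$ with $\sigma_i=\sum_x\lambda_{xi}\varphi_x$, give explicit such examples whenever some $\lambda_{xi}\neq 0$. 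The paper even remarks that the original Hermitian-symplectic metric $g$ is in general \emph{not} K\"ahler. So no trace identity can force all $C^k_{ij}$ to vanish.

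What the paper actually does is far more delicate. The Hermitian-symplectic equations, together with a positivity argument (a non-trivial exact non-negative $(k,k)$-form cannot exist on a compact Hermitian-symplectic manifold), are used to kill only \emph{some} of the structure constants: first $D^{\ast}_{\alpha\ast}=0$, then $Z_a=0$ via a trace identity on $r\times r$ matrices (this is where your positivity intuition does apply, but only to a sub-block), then $w_{xy}=0$. What survives are commuting normal matrices $D_x=-C_x$ and vectors $v^x_y$ satisfying $v^x_y=D_y\xi_x$ with $\xi_{xi}=\bar p_i\bar\lambda_{xi}$. The punchline is not a vanishing theorem but an explicit \emph{construction}: the modified coframe $\psi_i=\varphi_i+p_i\sigma_i$, $\psi_x=\varphi_x$ produces a new positive $(1,1)$-form $\tilde\omega$ which one checks directly is closed. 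Your proposal is missing this entire second half---the passage from the constrained structure constants to an actual K\"ahler metric---and the target you set for the first half is unreachable.
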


Note that having a compact quotient means that $G$ will have a bi-invariant measure (\cite{Milnor}). So by the averaging trick of Fino and Grantcharov \cite{FG, EFV}, if $M^n=G/\Gamma $ admits a  Hermitian-symplectic (or pluriclosed) metric, then it will admit a left-invariant Hermitian-symplectic (or pluriclosed) metric. So in particular, in the above theorem one may assume that there is an left-invariant Hermitian-symplectic metric, and the statement implies that Streets-Tian Conjecture holds for all compact quotients of $2$-step solvable groups. 

A technical point in our proof was the use of special  {\em non-unitary} frames. Although it makes the geometric computation clumsier but it does simplify the algebraic complexity considerably and also reveals a hidden symmetry which was crucial for the proof to go through. It is our  hope that this new approach could see further applications, in particular, when combined with the techniques developed by Freibert and Swann, it might help us to prove Fino-Vezzoni Conjecture for all $2$-step solvable groups, namely, dropping the  `pure types' assumption in the aforementioned theorem of Freibert and Swann.

\vspace{0.3cm}

\section{Hermitian geometry on Lie-complex manifolds}

In the past decades, the Hermitian geometry of Lie-complex manifolds have been extensively studied from various aspects by many people, including A. Gray, S. Salamon, L. Ugarte, A. Fino, L. Vezzoni, F. Podest\`a, D. Angella, A. Andrada, and others. There is a large amount of literature on this topic, and here we will just mention a small sample: \cite{AU}, \cite{CFGU},  \cite{FP3}, \cite{FinoTomassini09},  \cite{GiustiPodesta}, \cite{Salamon}, \cite{Ugarte}, \cite{WYZ}. For more general discussions on non-K\"ahler Hermitian geometry with a broader view, see for example \cite{AI}, \cite{AT}, \cite{Fu}, \cite{STW}, \cite{Tosatti} and the references therein.

In this section, we begin with the basics on Lie-complex manifolds. For our later purpose of verifying Streets-Tian  Conjecture on $2$-step solvable groups,  we need to deal with complex tangent frames which might not be unitary. This will inevitably make our formula clumsier, but the trade off is that it will significantly reduce some of the algebraic complexity that we will be facing later. 

Let $M=G/\Gamma$ be a Lie-complex manifold, namely, a compact quotient of a Lie group $G$ by a discrete subgroup $\Gamma \subseteq G$, where the complex structure $J$ is left-invariant. To verify Streets-Tian Conjecture for Lie-complex manifolds, first let us recall the neat averaging lemma by Fino-Grantcharov and Ugarte \cite{FG, Ugarte}, which allows one to reduce the consideration to invariant metrics. The following paragraphs are adopted from \cite{GuoZ2}, which we include here for readers convenience. 

Since the Lie group $G$ has compact quotient, it admits a bi-invariant measure by \cite{Milnor}. If $g$ is a Hermitian metric with K\"ahler form $\omega$, then by averaging $\omega$ over $M$, one gets an invariant Hermitian metric $\tilde{g}$ with K\"ahler form $\tilde{\omega}$. Clearly, if $g$ is pluriclosed, then so would be $\tilde{g}$. Similarly, if $g$ is a Hermitian-symplectic metric on $M$, then there would be $(2,0)$-form $\alpha$ on $M$ so that $\Omega = \alpha + \omega + \overline{\alpha}$ is a closed. By averaging $\Omega$, we get an invariant closed $2$-form $\tilde{\Omega}$. Clearly its $(1,1)$-part is necessarily positive, thus giving an invariant Hermitian-symplectic metric. If $g$ is balanced, on the other hand, then $\omega^{n-1}$ is closed. By averaging it over, we get an invariant closed $(n-1,n-1)$-form $\Psi$, which is clearly strictly positive. Since any strictly positive $(n-1,n-1)$-form on a complex $n$-manifold would equal to $\omega_0^{n-1}$ for a unique positive $(1,1)$-form $\omega_0$, we get an invariant balanced metric as well. We refer the readers to \cite[Lemma 3.2]{EFV}) and \cite{FG, Ugarte} for more details. The point is:

\vspace{0.2cm}

 {\em In order to verify Streets-Tian Conjecture or Fino-Vezzoni Conjecture for Lie-complex manifolds, it suffices to consider invariant metrics.} 

\vspace{0.2cm}

Let ${\mathfrak g}$ be the Lie algebra of $G$, and $J$ corresponds to a complex structure (which for convenience we will still denote by $J$) on  ${\mathfrak g}$, which means a linear isomorphism $J: {\mathfrak g} \rightarrow {\mathfrak g}$ satisfying $J^2=-I$ and the integrability condition:
\begin{equation} \label{integrability}
[x,y] - [Jx,Jy] + J[Jx,y] + J[x,Jy] =0, \ \ \ \ \ \ \forall \ x,y \in {\mathfrak g}. 
\end{equation}
Similarly, a left-invariant metric $g$ on $G$ compatible with $J$ corresponds to an inner product $g=\langle , \rangle$ on ${\mathfrak g}$ such that $\langle Jx,Jy\rangle = \langle x,y\rangle$ for any $x,y\in {\mathfrak g }$.

As in \cite{GuoZ2}, let ${\mathfrak g}^{\mathbb C}$ be the complexification of ${\mathfrak g}$, and write ${\mathfrak g}^{1,0}= \{ x-\sqrt{-1}Jx \mid x \in {\mathfrak g}\} \subseteq {\mathfrak g}^{\mathbb C}$. The integrability condition (\ref{integrability}) means that ${\mathfrak g}^{1,0}$ is a complex Lie subalgebra of ${\mathfrak g}^{\mathbb C}$. Extend $g=\langle , \rangle $ bi-linearly over ${\mathbb C}$, and let $e=\{ e_1, \ldots , e_n\}$ be a basis of ${\mathfrak g}^{1,0}$, which will be called a {\em frame} of $({\mathfrak g},J)$ or $({\mathfrak g}, J,g)$.  Denote by $\varphi$ the coframe dual to $e$, namely, a basis of the dual vector space $({\mathfrak g}^{1,0})^{\ast}$ such that $\varphi_i(e_j)=\delta_{ij}$, $\forall$ $1\leq i,j\leq n$. We will write
\begin{equation} \label{CandD}
C^j_{ik} = \varphi_j( [e_i,e_k] ), \ \ \ \ \ \  D^j_{ik} = \overline{\varphi}_i( [\overline{e}_j, e_k] )
\end{equation}
for the structure constants, which is  equivalent to
\begin{equation} \label{CandD2}
[e_i,e_j] = \sum_k C^k_{ij}e_k, \ \ \ \ \ [e_i, \overline{e}_j] = \sum_k \big( \overline{D^i_{kj}} e_k - D^j_{ki} \overline{e}_k \big) .
\end{equation}
In dual terms, the above is also equivalent to the (first) structure equation:
\begin{equation} \label{eq:structure}
d\varphi_i = -\frac{1}{2} \sum_{j,k} C^i_{jk} \,\varphi_j\wedge \varphi_k - \sum_{j,k} \overline{D^j_{ik}} \,\varphi_j \wedge \overline{\varphi}_k, \ \ \ \ \ \ \forall \  1\leq i\leq n.
\end{equation}
Differentiate the above, we get the  first Bianchi identity, which is equivalent to the Jacobi identity in this case:
\begin{equation} \label{Bianchi}
\left\{  \begin{split}  \sum_r \big( C^r_{ij}C^{\ell}_{rk} + C^r_{jk}C^{\ell}_{ri} + C^r_{ki}C^{\ell}_{rj} \big) \ = \ 0,  \hspace{3.2cm}\\
 \sum_r \big( C^r_{ik}D^{\ell}_{jr} + D^r_{ji}D^{\ell}_{rk} - D^r_{jk}D^{\ell}_{ri} \big) \ = \ 0, \hspace{3cm} \\
 \sum_r \big( C^r_{ik}\overline{D^r_{j \ell}}  - C^j_{rk}\overline{D^i_{r \ell}} + C^j_{ri}\overline{D^k_{r \ell}} -  D^{\ell}_{ri}\overline{D^k_{j r}} +  D^{\ell}_{rk}\overline{D^i_{jr}}  \big) \ = \ 0,  \end{split} \right.  
\end{equation}
for any $1\leq i,j,k,\ell\leq n$. When $G$ has a compact quotient, it (or equivalently, its Lie algebra ${\mathfrak g}$) is necessarily unimodular, that is,  $\mbox{tr}(ad_x)=0$ for any $x\in {\mathfrak g}$. In terms of structure constants, we have
\begin{equation} \label{unimo}
{\mathfrak g} \ \, \mbox{is unimodular}  \ \ \Longleftrightarrow  \ \ \sum_r \big( C^r_{ri} + D^r_{ri}\big) =0 , \, \ \forall \ i.
\end{equation}
Note that so far we did not assume $e$ to be unitary. Let  $g=(g_{i\bar{j}})=(\langle e_i, \overline{e}_j\rangle )$ be the matrix of metric under the frame $e$. Denote by  $\nabla$ the Chern connection of $g$, and by $T$, $R$ the torsion and curvature tensor of $\nabla$. Then  $T( e_i, \overline{e}_j)=0$. Write $T(e_i,e_k)  = \sum_j T^j_{ik}e_j$. We have the following:

\begin{lemma}  \label{lemma1}
Given a Hermitian Lie algebra $({\mathfrak g}, J,g)$, let $e$ be a frame with dual coframe $\varphi$, and structure constants $C$ and $D$ be given by (\ref{CandD}). Then the Chern torsion components $T^j_{ik}$ are given by
\begin{equation} \label{torsion}
 T^j_{ik}= - C^j_{ik} -  \sum_{\alpha , \beta} D^{\beta}_{\alpha k} \,g^{\bar{\beta}j} g_{i \bar{\alpha}} + \sum_{\alpha , \beta} D^{\beta}_{\alpha i} \,g^{\bar{\beta}j} g_{k \bar{\alpha}} .
\end{equation}
\end{lemma}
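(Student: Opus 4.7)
The plan is to derive the formula directly from the defining properties of the Chern connection $\nabla$: it is the unique connection satisfying $\nabla g = 0$, $\nabla J = 0$, and whose torsion has no $(1,1)$-part, i.e.\ $T(e_i,\bar e_j)=0$. Since $\nabla J = 0$, the connection preserves the splitting into $(1,0)$ and $(0,1)$ parts, so I can write
\begin{equation*}
\nabla_{e_i}e_k = \sum_j \Gamma^j_{ik}\,e_j, \qquad \nabla_{\bar e_j}e_i = \sum_l B^l_{\bar j i}\,e_l,
\end{equation*}
and $\nabla_{e_i}\bar e_l = \overline{\nabla_{\bar e_i}e_l}$ by reality of the underlying real connection.

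First I would use the torsion condition $T(e_i,\bar e_j)=\nabla_{e_i}\bar e_j - \nabla_{\bar e_j}e_i - [e_i,\bar e_j]=0$, together with the bracket expression (\ref{CandD2}), and compare $(1,0)$- and $(0,1)$-parts. The $(1,0)$-component yields $B^l_{\bar j i} = -\overline{D^i_{l j}}$, equivalently
\begin{equation*}
\nabla_{\bar e_j}e_i = -\sum_l \overline{D^i_{l j}}\,e_l, \qquad \nabla_{e_i}\bar e_l = -\sum_m D^l_{m i}\,\bar e_m.
\end{equation*}

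Next I would exploit metric compatibility. Since $g_{k\bar l}=\langle e_k,\bar e_l\rangle$ is a constant function on $G$ (as both the metric and the frame are left-invariant), $e_i(g_{k\bar l})=0$, so
\begin{equation*}
0 = \langle \nabla_{e_i}e_k,\bar e_l\rangle + \langle e_k,\nabla_{e_i}\bar e_l\rangle = \sum_j \Gamma^j_{ik}\,g_{j\bar l} - \sum_m D^l_{m i}\,g_{k\bar m}.
\end{equation*}
Solving this linear system by applying the inverse matrix $g^{\bar l j}$ then gives the Christoffel symbols explicitly,
\begin{equation*}
\Gamma^j_{ik} = \sum_{\alpha,\beta} D^\beta_{\alpha i}\,g^{\bar\beta j}\,g_{k\bar\alpha}.
\end{equation*}

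Finally I would substitute into $T^j_{ik}\,e_j = T(e_i,e_k)=\nabla_{e_i}e_k - \nabla_{e_k}e_i - [e_i,e_k]$, which after reading off $[e_i,e_k]=\sum_j C^j_{ik}e_j$ produces
\begin{equation*}
T^j_{ik} = \Gamma^j_{ik} - \Gamma^j_{ki} - C^j_{ik} = -C^j_{ik} - \sum_{\alpha,\beta} D^\beta_{\alpha k}\,g^{\bar\beta j}\,g_{i\bar\alpha} + \sum_{\alpha,\beta} D^\beta_{\alpha i}\,g^{\bar\beta j}\,g_{k\bar\alpha},
\end{equation*}
which is precisely the claimed formula. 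There is no genuine obstacle here; the only care required is bookkeeping of indices in the non-unitary setting, where $g_{i\bar j}$ is no longer the identity matrix and the inverse $g^{\bar\beta j}$ must be kept carefully throughout. In the unitary case $g_{i\bar j}=\delta_{ij}$ the formula collapses to the familiar $T^j_{ik}=-C^j_{ik}-D^j_{ik}+D^j_{ki}$, which serves as a useful sanity check.
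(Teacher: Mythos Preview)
Your argument is correct and complete. The approach differs from the paper's: the paper first quotes the unitary-frame formula $\nabla_{e_k}e_i=\sum_j D^j_{ik}e_j$ from \cite{VYZ}, then performs a change of frame $\tilde e_i=\sum_j(A^{-1})_{ij}e_j$ to a unitary frame (so $g=AA^*$), tracks how the structure constants $D^j_{ik}$ transform, and pulls the result back to the original frame. You instead work intrinsically, extracting $\nabla_{\bar e_j}e_i$ directly from the vanishing of the $(1,1)$-torsion and then solving the metric-compatibility equation for $\Gamma^j_{ik}$. Your route is more self-contained (no external reference needed, no change-of-basis bookkeeping) and makes transparent why the metric coefficients $g_{i\bar\alpha}$ and $g^{\bar\beta j}$ enter the formula; the paper's route has the virtue of anchoring the general case to the familiar unitary identity $T^j_{ik}=-C^j_{ik}-D^j_{ik}+D^j_{ki}$ and showing explicitly how it deforms under a non-unitary frame change.
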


\begin{proof}
If the frame $e$ is unitary, then it is well known (see for example \cite{VYZ}) that
$$ \nabla_{e_k}e_i = \sum_j D^j_{ik}e_j, \ \ \ \ \ \forall \ 1\leq i,k\leq n. $$
Therefore by definition we would have
$$ T^j_{ik}= \varphi_j (T(e_i,e_k))=\varphi_j (\nabla_{e_i}e_k - \nabla_{e_k}e_i - [e_i,e_k])= D^j_{ki}-D^j_{ik}-C^j_{ik}. $$
That is, the formula (\ref{torsion}) holds as $g=I$ in this case. When $e$ is not necessarily unitary, let us choose a non-degenerate matrix $A$ so that the new frame $\tilde{e}$ determined by $\tilde{e}_i=\sum_j (A^{-1})_{ij}e_j$ is unitary. Then $g=AA^{\ast}$, and $\tilde{\varphi} =\,^t\!A\varphi$. We have
$$ \tilde{D}^b_{ac} = \overline{\tilde{\varphi}}_a ( [\overline{\tilde{e}}_b, \tilde{e}_c]) = A^{\ast}_{a\alpha} (\overline{A}^{-1})_{b\beta} (A^{-1})_{c\gamma} \overline{\varphi}_{\alpha} ([\overline{e}_{\beta} , e_{\gamma}] ) =   A^{\ast}_{a\alpha} (\overline{A}^{-1})_{b\beta} (A^{-1})_{c\gamma} D^{\beta}_{\alpha \gamma }.  $$
Therefore,
\begin{eqnarray*}
 \varphi_j(\nabla_{e_k}e_i ) & = & (^t\!A^{-1})_{jb} A_{ia}A_{kc} \,\tilde{\varphi}_b(\nabla_{\tilde{e}_c} \tilde{e}_a ) \ \,= \  \,(^t\!A^{-1})_{jb} A_{ia}A_{kc} \tilde{D}^b_{ac} \\
 & = & (^t\!A^{-1})_{jb} A_{ia}A_{kc}  A^{\ast}_{a\alpha} (\overline{A}^{-1})_{b\beta} (A^{-1})_{c\gamma} \,D^{\beta}_{\alpha \gamma } \\
 & = & (A^{-1\ast})_{\beta b}(A^{-1})_{bj} A_{ia}  A^{\ast}_{a\alpha} A_{kc}(A^{-1})_{c\gamma} \,D^{\beta}_{\alpha \gamma } \\
 & = & (A^{-1\ast}A^{-1})_{\beta j} (A  A^{\ast})_{i\alpha} \,D^{\beta}_{\alpha k } \ \, = \ \, g^{\bar{\beta}j} g_{i\bar{\alpha}} \,D^{\beta}_{\alpha k}.
\end{eqnarray*}
Thus we obtain
\begin{eqnarray*}
 T^j_{ik} & = &  \varphi_j (\nabla_{e_i}e_k ) - \varphi_j(\nabla_{e_k}e_i ) - \varphi_j ([ e_i , e_k ]) \\
 & = & g^{\bar{\beta}j} g_{k\bar{\alpha}} \,D^{\beta}_{\alpha i} - g^{\bar{\beta}j} g_{i\bar{\alpha}} \,D^{\beta}_{\alpha k} - C^j_{ik}.
\end{eqnarray*}
This completes the proof of the lemma.
\end{proof}

To verify Streets-Tian Conjecture for  Lie-complex manifolds, we need the following characterization for Hermitian-symplectic metrics on such manifolds, which is a slight modification of \cite[Lemma 3]{GuoZ2} as here we dropped the assumption that the frame is unitary. We include the proof here for readers' convenience.
\begin{lemma} [\cite{GuoZ2}]    \label{lemmaGuo}
Let $({\mathfrak g}, J, g)$ be a Lie algebra equipped with a Hermitian structure. The metric $g$ is Hermitian-symplectic if and only if for any given frame $e$, there exists a skew-symmetric matrix $S$ such that
\begin{equation} \label{eq:HS}
\left\{ \begin{split} 
 \sum_r \big( S_{ri}C^r_{jk} + S_{rj} C^r_{ki}  + S_{rk} C^r_{ij} \big) =0,  \hspace{1.1cm}\\
 \sum_r \big( S_{rk}\overline{D^i_{rj}} - S_{ri} \overline{D^k_{rj}} \big) = - \frac{\sqrt{-1}}{2}\sum_r  T^r_{ik} g_{r\bar{j}},      \end{split} \right. \ \ \ \ \ \ \ \ \ \ \ \ \ \forall\ 1\leq i,j,k\leq n. 
 \end{equation}
\end{lemma}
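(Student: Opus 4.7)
The plan is to translate the defining condition $d\Omega=0$ directly into coordinates with respect to the given frame $e$. Any $(2,0)$-form on $\mathfrak{g}$ can be written uniquely as $\alpha=\tfrac{1}{2}\sum_{i,j}S_{ij}\,\varphi_i\wedge\varphi_j$ with $S$ skew-symmetric, while the K\"ahler form takes the form $\omega=\tfrac{\sqrt{-1}}{2}\sum_{i,j}g_{i\bar{j}}\,\varphi_i\wedge\overline{\varphi}_j$. Splitting $d\Omega=d\alpha+d\omega+d\overline{\alpha}$ by bidegree and noting that the $(1,2)$- and $(0,3)$-parts are the complex conjugates of the $(2,1)$- and $(3,0)$-parts, the equation $d\Omega=0$ becomes the pair of conditions $\partial\alpha=0$ and $\bar\partial\alpha+\partial\omega=0$.

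For the first condition, the structure equation (\ref{eq:structure}) gives $\partial\varphi_i=-\tfrac{1}{2}\sum_{r,k}C^i_{rk}\varphi_r\wedge\varphi_k$, hence
\[
\partial\alpha=-\tfrac{1}{2}\sum_{i,j,r,k}S_{ij}C^i_{rk}\,\varphi_r\wedge\varphi_k\wedge\varphi_j.
\]
Collecting the coefficient of each wedge triple and using the skew-symmetries of $S$ in $(i,j)$ and of $C^i_{rk}$ in $(r,k)$, $\partial\alpha=0$ is equivalent to the cyclic identity $\sum_r(S_{ri}C^r_{jk}+S_{rj}C^r_{ki}+S_{rk}C^r_{ij})=0$ for all $i,j,k$, which is exactly the first line of (\ref{eq:HS}).

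For the second condition, the $D$-part of (\ref{eq:structure}) produces $\bar\partial\varphi_i=-\sum_{r,k}\overline{D^r_{ik}}\,\varphi_r\wedge\overline{\varphi}_k$ and, by conjugation, $\partial\overline{\varphi}_j=\sum_{r,k}D^r_{jk}\,\varphi_k\wedge\overline{\varphi}_r$. After reindexing $i\leftrightarrow j$ in one of the two contributions and exploiting $S_{ij}=-S_{ji}$, one finds
\[
\bar\partial\alpha=\sum_{i,j,r,k}S_{ij}\,\overline{D^r_{ik}}\,\varphi_r\wedge\varphi_j\wedge\overline{\varphi}_k,
\]
while $\partial\omega$ splits into two pieces, one with coefficients $g_{i\bar{j}}C^i_{rk}$ and the other $g_{i\bar{j}}D^r_{jk}$. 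Setting $\bar\partial\alpha+\partial\omega=0$, antisymmetrizing in the two holomorphic indices, and relabeling yields
\[
\sum_r\bigl(S_{rk}\overline{D^i_{rj}}-S_{ri}\overline{D^k_{rj}}\bigr)=\tfrac{\sqrt{-1}}{2}\Bigl(\sum_r g_{r\bar{j}}C^r_{ik}+\sum_{\alpha}g_{i\bar{\alpha}}D^j_{\alpha k}-\sum_{\alpha}g_{k\bar{\alpha}}D^j_{\alpha i}\Bigr).
\]

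The remaining step is to recognize the right-hand side via Lemma \ref{lemma1}. Contracting formula (\ref{torsion}) with $g_{r\bar{j}}$ and using the duality $\sum_r g^{\bar{\beta}r}g_{r\bar{j}}=\delta_{\beta j}$ yields
\[
\sum_r T^r_{ik}\,g_{r\bar{j}}=-\sum_r C^r_{ik}g_{r\bar{j}}-\sum_{\alpha}D^j_{\alpha k}g_{i\bar{\alpha}}+\sum_{\alpha}D^j_{\alpha i}g_{k\bar{\alpha}},
\]
which is precisely the negative of the bracketed expression above, so the displayed right-hand side equals $-\tfrac{\sqrt{-1}}{2}\sum_r T^r_{ik}g_{r\bar{j}}$, matching the second line of (\ref{eq:HS}). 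The main obstacle I anticipate is purely combinatorial bookkeeping in the $(2,1)$-computation: because $e$ is not assumed unitary, the metric factors $g_{i\bar{\alpha}}$ and $g^{\bar{\beta}r}$ must be tracked throughout, and one must antisymmetrize carefully in the two holomorphic indices so that the resulting linear combination of $C$'s and $D$'s aligns with the contracted Chern torsion furnished by Lemma \ref{lemma1}; once that substitution is made, the identification is immediate.
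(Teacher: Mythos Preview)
Your approach is correct and essentially identical to the paper's: both reduce $d\Omega=0$ to $\partial\alpha=0$ and $\bar\partial\alpha=-\partial\omega$, then expand via the structure equation (\ref{eq:structure}). The only cosmetic difference is that the paper identifies $\partial\omega$ directly with the torsion through the standard formula $\partial\omega=\sqrt{-1}\,{}^t\!\tau\,g\,\overline{\varphi}$, whereas you expand $\partial\omega$ in $C$'s and $D$'s first and then invoke Lemma~\ref{lemma1} to recognise the combination as $-\sum_r T^r_{ik}g_{r\bar j}$; also note that the paper's convention is $\omega=\sqrt{-1}\sum g_{i\bar j}\varphi_i\wedge\overline{\varphi}_j$ and $\alpha=\sum S_{ik}\varphi_i\wedge\varphi_k$ (no factor $\tfrac12$), so your extra $\tfrac12$ in each happens to cancel and yields the same final constants.
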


\begin{proof}
Let $\varphi$ be the coframe dual to $e$. By definition, $g$ will be Hermitian-symplectic if and only if there exists a $(2,0)$-form $\alpha$ so that $\Omega = \alpha + \omega + \overline{\alpha}$ is closed. Here $\omega$ is the K\"ahler form of $g$, defined by
$$ \omega = \sqrt{-1}\sum_{i,j} g_{i\bar{j}} \, \varphi_i \wedge \overline{\varphi}_j  = \sqrt{-1} \,^t\!\varphi \,g \,\overline{\varphi}, \ \ \ \ \ \ \ \ g = (g_{i\bar{j}}) , \ \ g_{i\bar{j}} = \langle e_i, \overline{e}_j\rangle . $$
 This means that $\partial \alpha =0$ and $\overline{\partial}\alpha = - \partial \omega$. Write $\alpha = \sum_{i,k} S_{ik}\varphi_i\wedge \varphi_k$, where $S$ is a skew-symmetric matrix. Then we have
\begin{eqnarray*}
 \partial \alpha & = &  2\sum_{r,k} S_{rk} \partial \varphi_r \wedge \varphi_k \ = \ - \sum_{r,i,j,k} S_{rk} C^r_{ij} \, \varphi_i\wedge \varphi_j \wedge \varphi_k \\
 & = & -\frac{1}{3} \sum_{i,j,k} \{ \sum_r \big(  S_{rk} C^r_{ij} + S_{rj} C^r_{ki} + S_{ri} C^r_{jk} \big)\}  \,\varphi_i\wedge \varphi_j \wedge \varphi_k. 
\end{eqnarray*}
So by $\partial \alpha =0$ we get the first identity in (\ref{eq:HS}). Similarly, we have
\begin{eqnarray*}
 \overline{\partial} \alpha & = &  2\sum_{r,k} S_{rk} \overline{\partial}\varphi_r \wedge \varphi_k \ = \ - 2 \sum_{r,i,j,k} S_{rk} \overline{D^i_{rj}} \, \varphi_i\wedge \overline{\varphi}_j \wedge \varphi_k \\
 & = & - \sum_{i,j,k} \{ \sum_r \big( S_{rk} \overline{D^i_{rj}} - S_{ri} \overline{D^k_{rj}} \big)\}  \, \varphi_i\wedge \overline{\varphi}_j \wedge \varphi_k. 
\end{eqnarray*}
On the other hand,
$$ \partial \omega = \sqrt{-1} \,^t\!\tau g \,\overline{\varphi} = \frac{\sqrt{-1}}{2} \sum_{i,j,k,r} T^r_{ik} g_{r\bar{j}}\,\varphi_i\wedge \varphi_k \wedge \overline{\varphi}_j , $$
so by comparing the above two lines we get the second identity in (\ref{eq:HS}). This completes the proof of Lemma \ref{lemmaGuo}.
\end{proof}

For our later proofs, we will need the following property of Hermitian-symplectic manifolds from \cite[Lemma 2]{CaoZ2}:

\begin{lemma} [\cite{CaoZ2}] \label{lemmaCao}
Let $(M^n,g)$ be a compact Hermitian manifold. If $g$ is Hermitian-symplectic, then any non-negative $(p,p)$-form $\Phi$ which is $d$-exact  must be trivial.
\end{lemma}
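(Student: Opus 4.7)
The plan is to apply Stokes's theorem to $\Phi$ paired against a suitable closed form, and then extract pointwise information. Let $\Omega = \alpha + \omega + \overline{\alpha}$ be the closed symplectic form taming $J$, so that $d\Omega = 0$ and hence $d(\Omega^{n-p}) = 0$. Writing $\Phi = d\eta$, Stokes gives
\[
\int_M \Phi \wedge \Omega^{n-p} \;=\; \int_M d(\eta \wedge \Omega^{n-p}) \;=\; 0.
\]
The entire proof then reduces to showing that $\Phi \wedge \Omega^{n-p}$ is pointwise a non-negative top form, strictly positive wherever $\Phi \neq 0$: the vanishing of the integral will then force $\Phi \equiv 0$.

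For the pointwise analysis, fix $x \in M$ and write $\Phi(x)$ as a non-negative combination of strongly positive elementary forms,
\[
\Phi(x) \;=\; \sum_I c_I\,(\sqrt{-1})^{p^{2}}\,\beta_I\wedge\overline{\beta_I}, \qquad c_I\geq 0,
\]
with each $\beta_I = \beta_1^I\wedge\cdots\wedge\beta_p^I$ a simple $(p,0)$-covector. A direct bidegree count shows that only the component of $\Omega^{n-p}$ lying entirely in the ``complementary'' $(n-p,n-p)$-directions survives in $(\sqrt{-1})^{p^{2}}\beta_I\wedge\overline{\beta_I}\wedge\Omega^{n-p}$, and this surviving component is exactly $(\Omega|_{W_I})^{n-p}$, where $W_I\subset T_xM$ is the $J$-invariant complex subspace of complex dimension $n-p$ cut out by $\beta_1^I = \cdots = \beta_p^I = 0$.

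The key geometric input is that the taming property is inherited by complex subspaces: since $\Omega(v,Jv)>0$ for every nonzero $v\in T_xM$, in particular for $v\in W_I$, the restriction $\Omega|_{W_I}$ tames $J|_{W_I}$. Hence $\Omega|_{W_I}$ is a symplectic form on $W_I$ inducing the canonical $J$-orientation, and $(\Omega|_{W_I})^{n-p}$ is a strictly positive top form on $W_I$. Each summand therefore contributes a strictly positive multiple of the ambient volume form when $c_I>0$, and zero when $c_I = 0$. Summing, $\Phi \wedge \Omega^{n-p}\geq 0$ pointwise, and vanishing at $x$ forces all $c_I = 0$, i.e.\ $\Phi(x) = 0$. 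Combined with the Stokes vanishing of the integral, this yields $\Phi \equiv 0$.

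The main technical obstacle, as I see it, is the pointwise positivity step, which relies on (i) a non-negative decomposition of $\Phi(x)$ into strongly positive rank-one pieces, and (ii) the inheritance of the taming condition by restrictions to $J$-invariant subspaces. A secondary subtlety is the precise interpretation of ``non-negative'' in the statement: the strongly positive case is handled directly as above; the weakly positive case can be reduced to it by duality, using that $\Omega^{n-p}_{(n-p,n-p)}$ is itself weakly positive by the same complex-subspace restriction argument, and then pairing with weakly positive $\Phi$ is controlled via the duality between the strongly and weakly positive cones.
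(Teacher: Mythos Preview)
The paper does not supply its own proof of this lemma; it is quoted verbatim from \cite{CaoZ2} and used as a black box. So there is no in-paper argument to compare against.

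Your approach is the natural one and is essentially correct. The key steps---pairing $\Phi$ with the closed form $\Omega^{n-p}$, applying Stokes, and reducing to the pointwise claim that $(\sqrt{-1})^{p^2}\beta\wedge\overline{\beta}\wedge\Omega^{n-p}>0$ for a decomposable $(p,0)$-covector $\beta$---are all sound. The identification of that wedge with $(\Omega|_{W})^{n-p}$ on the complementary $J$-invariant subspace $W$ is correct, and the positivity of $(\Omega|_W)^{n-p}$ follows because taming passes to $J$-invariant subspaces (so $\Omega|_W$ is nondegenerate there) and a homotopy $\Omega_t=t\alpha+\omega+t\overline{\alpha}$ connects $\Omega$ to its positive $(1,1)$-part $\omega$ through forms that still tame $J$, fixing the sign.

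One remark: your decomposition $\Phi(x)=\sum_I c_I(\sqrt{-1})^{p^2}\beta_I\wedge\overline{\beta_I}$ presupposes that $\Phi$ is \emph{strongly} positive, which you note. For the weakly positive case your duality sketch is in the right direction but would need the $(n-p,n-p)$-component of $\Omega^{n-p}$ to be \emph{strongly} positive, and that is not immediate from the restriction argument (which only gives weak positivity). For the purposes of this paper the issue is moot: the sole application (Lemma~\ref{restriction2}) takes $\Phi=\sqrt{-1}\psi_1\overline{\psi}_1\wedge\cdots\wedge\sqrt{-1}\psi_k\overline{\psi}_k$, a single decomposable term, so only the strongly positive case is needed and your argument covers it cleanly.
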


\vspace{0.3cm}

\section{$2$-step solvable Lie algebras}

In \cite{FSwann, FSwann2}, Freibert and Swann systematically studied the Hermitian geometry of $2$-step solvable Lie algebras. Among other things, they give  characterizations to balanced and pluriclosed metrics on such algebras, and confirmed Fino-Vezzoni Conjecture for all such Lie algebras that are of the pure types. Let ${\mathfrak g}$ be a Lie algebra and $J$ a complex structure on ${\mathfrak g}$. Let ${\mathfrak g}' = [{\mathfrak g}, {\mathfrak g}]$ be the commutator. Then ${\mathfrak g}$ is $2$-step solvable if ${\mathfrak g}$ is not abelian but ${\mathfrak g}' $ is abelian.  Freibert and Swann wrote 
$$ {\mathfrak g} = {\mathfrak g}'_J \oplus (V\oplus JV) \oplus W, \ \ \ \ \ \ {\mathfrak g}' = {\mathfrak g}'_J \oplus V, $$
where ${\mathfrak g}'_J=J{\mathfrak g}' \cap {\mathfrak g}'$, $V$ is a complement subspace of  ${\mathfrak g}'_J$ in  ${\mathfrak g}'$, and $W$ is a $J$-invariant complement subspace of $J{\mathfrak g}'+{\mathfrak g}'$ in ${\mathfrak g}$. They introduced the following terminologies for ${\mathfrak g}$:
\begin{itemize} 
\item Pure type I: if  $J{\mathfrak g}'\cap {\mathfrak g}'=0$,
\item Pure type II: if $J{\mathfrak g}'={\mathfrak g}'$,
\item Pure type III: if $J{\mathfrak g}'+{\mathfrak g}'={\mathfrak g}$.
\end{itemize}
That is, in their decomposition of ${\mathfrak g}$ into three summands, if at least one of the summand is zero, then the Lie algebra is said to be of pure type. Of course when $({\mathfrak g}, J)$ is equipped with a metric $g$, then one can choose $V$ to be $({\mathfrak g}'_J)^{\perp} \cap {\mathfrak g}'$, and $W = ({\mathfrak g}' + J {\mathfrak g}')^{\perp}$ in ${\mathfrak g}$, so the three summands are perpendicular to each other. Note however within the middle summand, the subspaces $V$ and $JV$ may not be perpendicular to each other.   

\begin{definition}[{\bf admissible frames}] 
Let $({\mathfrak g}, J,g)$ be a $2$-step solvable Lie algebra equipped with a Hermitian structure. Write $W=({\mathfrak g}' + J {\mathfrak g}')^{\perp}$ and $V=({\mathfrak g}'_J)^{\perp} \cap {\mathfrak g}'$. Then a frame $e$ is said to be {\em admissible} if $\{ e_1, \ldots , e_r\} $ is a unitary basis of $({\mathfrak g}'_J)^{1,0}$, and $\{ e_{s+1}, \ldots , e_n\}$ is a unitary basis of $W^{1,0}$, while $V$ is spanned by $ e_{\alpha} + \overline{e}_{\alpha}$ for $r\!+\!1\leq \alpha \leq s$. 
\end{definition}

Here we denoted by $2r$ the (real) dimension of ${\mathfrak g}'_J$ and by $2(n-s)$ the (real) dimension of $W$. We have $0\leq r\leq s\leq n$. From now on, we will always follow the index range convention:
\begin{equation} \label{convention}
1\leq i,j,k,\ldots \leq r;  \  \ \ r\!+\!1\leq \alpha , \beta , \gamma , \ldots  \leq s;   \ \ \ s\!+\!1\leq a, b, c, \ldots , \leq  n.
\end{equation}
Under admissible frames, the structure constants for $2$-step solvable Lie algebras satisfy the following restrictions:

\begin{lemma} \label{restriction1}
Let  $({\mathfrak g}, J,g)$ be a $2$-step solvable Lie algebra equipped with a Hermitian structure, and $e$ be an admissible frame. Then 
$$ \left\{ \begin{split}  C^{\ast}_{ij}=C^{\alpha}_{\ast \ast} = C^{a}_{\ast \ast}= D^{\ast}_{a\ast} = D^i_{\ast j} = D^{\alpha}_{\ast j}=D^i_{\alpha \beta} = 0, \\
 C^j_{i\alpha}=-\overline{D^i_{j\alpha}}, \ \ \ \  C^{\ast}_{\alpha \beta} = \overline{ D^{\beta}_{\ast \alpha} }  - \overline{ D^{\alpha}_{\ast \beta} },  \hspace{2.0cm} \\  \overline{D^x_{\alpha y}} = - D^y_{\alpha x}, \ \ \ \ \forall \ 1\leq x,y\leq n.  \hspace{2.55cm}
 \end{split} \right.
 $$
Here and from now on the indices $i,j,\alpha, \beta$ all follow the index range convention (\ref{convention}), while $\ast$ stands for an arbitrary integer between $1$ and $n$. 
\end{lemma}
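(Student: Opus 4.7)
The plan is to derive each identity by combining three facts: (i) $({\mathfrak g}')^{\mathbb C}$ is abelian (since ${\mathfrak g}$ is $2$-step solvable), (ii) every bracket in $[{\mathfrak g}^{\mathbb C}, {\mathfrak g}^{\mathbb C}]$ lies inside $({\mathfrak g}')^{\mathbb C}$, and (iii) the integrability identity (\ref{integrability}). The starting point is to describe $({\mathfrak g}')^{\mathbb C}$ explicitly in an admissible frame: writing $e_i = x_i - \sqrt{-1}Jx_i$ with $x_i \in {\mathfrak g}'_J$ for $i \leq r$ and $e_\alpha = x_\alpha - \sqrt{-1}Jx_\alpha$ with $x_\alpha \in V$ for $r < \alpha \leq s$, one checks that
$$
({\mathfrak g}')^{\mathbb C} \ = \ \mathrm{span}_{\mathbb C}\{ e_i, \overline{e}_i : 1 \leq i \leq r \} \ + \ \mathrm{span}_{\mathbb C}\{ e_\alpha + \overline{e}_\alpha : r < \alpha \leq s \}.
$$
In particular, any element of $({\mathfrak g}')^{\mathbb C}$ has vanishing $e_a, \overline{e}_a$-components for $a > s$, and matching $e_\alpha, \overline{e}_\alpha$-coefficients for each $\alpha$.

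From this single observation several identities follow at once. Since $[e_x, e_y] \in ({\mathfrak g}')^{\mathbb C}$ is purely of type $(1,0)$ (its $\overline{e}$-expansion vanishes), the matching constraint forces $C^\alpha_{**} = 0$ and $C^a_{**} = 0$. Likewise $[\overline{e}_l, e_k] \in ({\mathfrak g}')^{\mathbb C}$ yields $D^*_{a*} = 0$ from the vanishing $\overline{e}_a$-component, while matching the $e_\alpha$- and $\overline{e}_\alpha$-coefficients produces $D^l_{\alpha k} = -\overline{D^k_{\alpha l}}$, which is the last identity in the lemma. Next, for $i, j \leq r$ all four of $e_i, \overline{e}_i, e_j, \overline{e}_j$ lie in the abelian $({\mathfrak g}')^{\mathbb C}$, so $[e_i, e_j] = 0$ and $[\overline{e}_i, e_j] = 0$, producing $C^*_{ij} = 0$ and $D^i_{*j} = 0$.

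The subtler identities $D^\alpha_{*j} = 0$ and $D^i_{\alpha\beta} = 0$ invoke integrability. For the first, expanding $[\overline{e}_\alpha, e_j]$ via $e = x - \sqrt{-1}Jx$, the abelian property of ${\mathfrak g}'$ kills $[x_\alpha, x_j]$ and $[x_\alpha, Jx_j]$, and integrability reduces $[Jx_\alpha, Jx_j]$ to $J[Jx_\alpha, x_j]$, so $[\overline{e}_\alpha, e_j] = (\sqrt{-1} + J)\, u$ with $u := [Jx_\alpha, x_j] \in {\mathfrak g}'$, which lies in ${\mathfrak g}^{1,0}$ and therefore has no $\overline{e}$-component. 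An analogous computation yields $[\overline{e}_i, e_\beta] = Jv - \sqrt{-1}\, v$ with $v := [x_i, Jx_\beta] \in {\mathfrak g}'$; the decisive step is to note that, using integrability together with $[x_i, x_\beta] = [Jx_i, x_\beta] = 0$, one has $Jv = [Jx_i, Jx_\beta] \in {\mathfrak g}'$ as well, so $v \in {\mathfrak g}' \cap J{\mathfrak g}' = {\mathfrak g}'_J$, which places $Jv - \sqrt{-1}\, v$ inside the $\overline{e}_k$-span and forces $D^i_{\alpha\beta} = 0$. Finally, the relations $C^j_{i\alpha} = -\overline{D^i_{j\alpha}}$ and $C^*_{\alpha\beta} = \overline{D^\beta_{*\alpha}} - \overline{D^\alpha_{*\beta}}$ will be read off by directly computing $[e_i, e_\alpha]$ and $[e_\alpha, e_\beta]$ in the $(x, Jx)$-decomposition and matching their $e_k$-coefficients against the $\overline{e}_k$-coefficients of $[\overline{e}_*, e_*]$.

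The main obstacle will be the bookkeeping around which vectors lie in ${\mathfrak g}'$ versus $J{\mathfrak g}'$ versus ${\mathfrak g}'_J$: the non-obvious vanishings all hinge on upgrading a vector known a priori to lie in ${\mathfrak g}'$ to membership in ${\mathfrak g}'_J$, which is achieved by showing that both the vector and its $J$-image lie in ${\mathfrak g}'$, combining the ideal property with the integrability identity.
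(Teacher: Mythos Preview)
Your proposal is correct and follows the same overall strategy as the paper: identify $({\mathfrak g}')^{\mathbb C}$ in the admissible frame, use that it is abelian, and use that every bracket lands in it. The routine identities ($C^{\alpha}_{\ast\ast}=C^{a}_{\ast\ast}=D^{\ast}_{a\ast}=C^{\ast}_{ij}=D^{i}_{\ast j}=0$ and $\overline{D^x_{\alpha y}}=-D^y_{\alpha x}$) are obtained exactly as in the paper.

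The one genuine difference is in how you handle the ``subtler'' identities $D^{\alpha}_{\ast j}=0$ and $D^{i}_{\alpha\beta}=0$. You unpack $[\overline{e}_\alpha,e_j]$ and $[\overline{e}_i,e_\beta]$ via the real decomposition $e=x-\sqrt{-1}Jx$, invoke the integrability identity (\ref{integrability}) explicitly, and then argue that the resulting vector lies in ${\mathfrak g}^{1,0}$ (respectively in $\mathrm{span}\{\overline{e}_k:k\le r\}$). This is correct. The paper instead uses the single abelian relation $[e_i,\,e_\alpha+\overline{e}_\alpha]=0$, which in one stroke yields both $C^{\ast}_{i\alpha}=-\overline{D^i_{\ast\alpha}}$ and $D^{\alpha}_{\ast i}=0$; then $D^i_{\beta\alpha}=0$ drops out by specializing the first relation to $\ast=\beta$ and using the already-known $C^{\beta}_{\ast\ast}=0$. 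So the paper never needs to descend to the $(x,Jx)$ level or invoke (\ref{integrability}) explicitly. Your route works, but the paper's is shorter: the two facts you treat as ``subtler'' are in fact immediate corollaries of relations you have already established.
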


\begin{proof}
By definition, ${\mathfrak g}'$ is spanned by $e_i+\overline{e}_i$, $\sqrt{-1}(e_i-\overline{e}_i)$ for all $1\leq i\leq r$ and $e_{\alpha}+\overline{e}_{\alpha}$ for all $r\!+\!1\leq \alpha \leq s$. Since ${\mathfrak g}'$ is abelian, we have
$$ [e_i, e_j]=[e_i, \overline{e}_j] = [e_i, e_{\alpha}+\overline{e}_{\alpha}] = [e_{\alpha}+\overline{e}_{\alpha}, e_{\beta}+\overline{e}_{\beta}]=0. $$
So by (\ref{CandD2}) we get $C^{\ast}_{ij}=D^i_{\ast j}=0$, $C^{\ast}_{i\alpha} + \overline{D^i_{\ast \alpha}} =0$, $D^{\alpha}_{\ast i}=0$, and $C^{\ast}_{\alpha \beta} = \overline{ D^{\beta}_{\ast \alpha} }  - \overline{ D^{\alpha}_{\ast \beta} }$. Now suppose that $X=\sum_{t=1}^nX_te_t$ is any type $(1,0)$ vector. Then clearly $X+\overline{X}\in {\mathfrak g}'$ if and only if $X_a=0$ for each $a$ and $X_{\alpha}\in {\mathbb R}$ for each $\alpha$. For any $1\leq x,y\leq n$, let
$$ X=[e_x, e_y] = \sum_{t=1}^n C^{t}_{xy} e_t \ \ \ \ \mbox{or} \ \ X=[\sqrt{-1}e_x, e_y] = \sum_{t=1}^n (\sqrt{-1}C^{t}_{xy}) e_t, $$
Since in both cases $X+\overline{X}$ belongs to ${\mathfrak g}'$, we get $C^a_{xy}=0$, $C^{\alpha}_{xy}=0$. Similarly, by considering
$$ [e_x, \overline{e}_y] + \overline{[e_x, \overline{e}_y] } = \sum_t \{ (\overline{D^x_{t y}} - \overline{D^y_{t x}})e_t + (D^x_{t y}- D^y_{t x})\overline{e}_t \} $$
and 
$$ [\sqrt{-1}e_x, \overline{e}_y] + \overline{[\sqrt{-1}e_x, \overline{e}_y] } = \sum_t \{ \sqrt{-1}(\overline{D^x_{t y}} +\overline{D^y_{t x}})e_t - \sqrt{-1} (D^x_{t y}+ D^y_{t x})\overline{e}_t \}, $$
both are in ${\mathfrak g}'$, so for $t=a$ we get $D^{\ast}_{a\ast}=0$ for any $a$, while for $t=\alpha$ we know that 
$$ D^x_{\alpha y}- D^y_{\alpha x} \in {\mathbb R}, \ \ \ \sqrt{-1}(D^x_{\alpha y}+ D^y_{\alpha x})\in {\mathbb R} , \ \ \ \ \forall \ r\!+\!1\leq \alpha \leq s. $$
This means that $\overline{D^x_{\alpha y}}= -D^y_{\alpha x}$. Note that by $C^{\ast}_{i\alpha} + \overline{D^i_{\ast \alpha}} =0$ and $C^{\alpha}_{\ast\ast}=0$, we get $D^i_{\beta \alpha}=0$. Similarly, by $C^{\ast}_{\alpha \beta} = \overline{ D^{\beta}_{\ast \alpha} }  - \overline{ D^{\alpha}_{\ast \beta} }$, we know that $D^{\alpha}_{\gamma \beta}= D^{\beta}_{\gamma \alpha}$ is pure imaginary. This completes the proof of the lemma.
\end{proof}

\begin{lemma} \label{restriction2}
Let $(M^n,g)$ be a compact Hermitian-symplectic manifold. Suppose that $M=G/\Gamma$ is the quotient of a $2$-step solvable Lie group by a discrete subgroup and the complex structure $J$ is left-invariant. Let $({\mathfrak g}, J,g)$ be the corresponding Hermitian Lie algebra. Then under any admissible frame $e$, the structure constants satisfy
$$ D^{\ast}_{\alpha \ast} =0, \ \ \ \ \forall \ r\!+\!1 \leq \alpha \leq s. $$
\end{lemma}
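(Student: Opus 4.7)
The plan is to combine three ingredients: the structural vanishings from Lemma \ref{restriction1}, the Hermitian-symplectic identities of Lemma \ref{lemmaGuo}, and the positivity principle of Lemma \ref{lemmaCao}.

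First I would enumerate the independent unknowns among $D^x_{\alpha y}$ not already killed by Lemma \ref{restriction1}. Using the symmetry $\overline{D^x_{\alpha y}}=-D^y_{\alpha x}$ (which reflects that the $e_\alpha$- and $\overline{e}_\alpha$-components of any bracket in $\mathfrak{g}'\otimes\mathbb{C}$ must agree, since $V$ is spanned by the real vectors $e_\alpha+\overline{e}_\alpha$), the remaining unknowns fall into four families indexed by $i\in[1,r]$, $\alpha,\beta,\gamma\in[r+1,s]$, $a,b\in[s+1,n]$: namely $D^i_{\alpha a}$, $D^\beta_{\alpha\gamma}$ (pure imaginary when $\beta=\gamma$), $D^\beta_{\alpha a}$, and $D^a_{\alpha b}$. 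Next I would exploit the block-diagonal form of the metric $g=\mathrm{diag}(I_r,H,I_{n-s})$ with $H=(h_{\alpha\bar\beta})$ in the admissible frame. A useful observation is that $d\varphi_a=0$ for every $a\in[s+1,n]$, since $C^a_{\ast\ast}=0$ and $D^{\ast}_{a\ast}=0$ force the right-hand side of (\ref{eq:structure}) to collapse; consequently the $W$-part $\omega_W:=\sqrt{-1}\sum_a\varphi_a\wedge\overline{\varphi}_a$ is $d$-closed.

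Third, I would systematically feed index choices into the second identity of (\ref{eq:HS}). A representative calculation: taking $I=\alpha$, $K=\beta$ middle, $J=j$ small, the left-hand side vanishes by $D^{\alpha}_{\ast j}=0$, and the right-hand side, computed via (\ref{torsion}), becomes $-\tfrac{\sqrt{-1}}{2}(\overline{D^\alpha_{j\beta}}-\overline{D^\beta_{j\alpha}})$, forcing the symmetry $D^\alpha_{j\beta}=D^\beta_{j\alpha}$. Complementary choices (for example $I=\alpha$, $K=a$, $J$ either small or middle), combined with the unimodularity identity $\sum_t(C^t_{tY}+D^t_{tY})=0$ evaluated at $Y=\alpha$, yield further linear constraints among the four families of unknowns and express each $T^{\ast}_{\alpha\ast}$ as a specific linear combination of them.

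The main obstacle, and the crux of the argument, is to repackage these linear relations as a sum of squares via a nonnegative $d$-exact $(p,p)$-form to which Lemma \ref{lemmaCao} applies. My candidate is a form of the type $\Phi=\omega_V\wedge\omega_W^{n-s-1}\wedge\omega^{s-r-1}$ with $\omega_V:=\sqrt{-1}\sum_{\alpha,\beta}h_{\alpha\beta}\varphi_\alpha\wedge\overline{\varphi}_\beta$, or a variant thereof paired with the HS-closed form $\Omega=\alpha+\omega+\overline{\alpha}$. Since $\omega_W$ and $\Omega$ are closed, computing $d\Phi$ reduces to contributions involving $d\omega_V$, which expands in the $D^{\ast}_{\alpha\ast}$ via (\ref{eq:structure}). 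Using the linear identities derived above together with the middle-block symmetry $\overline{D^x_{\alpha y}}=-D^y_{\alpha x}$, the expression should regroup into a nonnegative quantity whose class is exact; Lemma \ref{lemmaCao} then forces each $|D^x_{\alpha y}|^2=0$. The hidden symmetry unlocked by the \emph{non-unitary} admissible frame is precisely the freedom provided by the middle block $H\neq I$, without which the last step would not collapse to a sum of squares; I expect the delicate point to be the exact choice of weights that makes both exactness and pointwise nonnegativity hold simultaneously.
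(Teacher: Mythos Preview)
Your plan overshoots: the Hermitian-symplectic identities of Lemma~\ref{lemmaGuo} are not needed for this lemma at all, and your candidate form $\Phi=\omega_V\wedge\omega_W^{n-s-1}\wedge\omega^{s-r-1}$ is neither obviously exact nor obviously of definite sign; the hoped-for ``should regroup into a nonnegative exact quantity'' is speculative and, as stated, does not constitute a proof. The missing idea is much more direct. You already recorded the key symmetry $\overline{D^x_{\alpha y}}=-D^y_{\alpha x}$ from Lemma~\ref{restriction1}, but you did not draw its immediate consequence: for each fixed $\alpha$ the $n\times n$ matrix $A=(D^x_{\alpha y})$ is skew-Hermitian, hence unitarily diagonalizable with purely imaginary eigenvalues $\sqrt{-1}\lambda_1,\ldots,\sqrt{-1}\lambda_n$ ($\lambda_t\in\mathbb{R}$). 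Since $C^{\alpha}_{\ast\ast}=0$, the structure equation gives
\[
d\varphi_\alpha \;=\; -\sum_{x,y}\overline{D^x_{\alpha y}}\,\varphi_x\wedge\overline{\varphi}_y \;=\; \sum_{t=1}^n \lambda_t\,\sqrt{-1}\,\psi_t\wedge\overline{\psi}_t,
\]
a \emph{real} $(1,1)$-form in a suitable unitary coframe $\psi$. If $A\neq 0$ has rank $k$, then $(d\varphi_\alpha)^k=k!\,\lambda_1\cdots\lambda_k\,\Phi$ with $\Phi=\bigwedge_{t\le k}\sqrt{-1}\psi_t\wedge\overline{\psi}_t$ nonnegative and nontrivial; and $(d\varphi_\alpha)^k=d\bigl(\varphi_\alpha\wedge(d\varphi_\alpha)^{k-1}\bigr)$ is exact. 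Lemma~\ref{lemmaCao} then forces $A=0$, i.e.\ $D^{\ast}_{\alpha\ast}=0$.

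In short, the exact nonnegative form you are searching for is simply (a constant multiple of) the top power of $d\varphi_\alpha$ itself; no use of the $S$-matrix relations, no block metric $H$, and no delicate choice of weights is required. Your third step (feeding indices into \eqref{eq:HS}) belongs to the \emph{next} stage of the argument (Lemma~\ref{restriction4}), after $D^{\ast}_{\alpha\ast}=0$ has already been established.
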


\begin{proof}
Let $\varphi$ be the coframe dual to $e$. By (\ref{eq:structure}) and Lemma \ref{restriction1}, we have $d\varphi_{a}=0$ for each $a$ and
$$ d\varphi_{\alpha}= - \sum_{x,y=1}^n \overline{D^x_{\alpha y} } \,\varphi_x \wedge \overline{\varphi}_y .$$
If $d\varphi_{\alpha}\neq 0$, then let us fix this $\alpha$ and consider the $n\times n$ matrix $A=(D^x_{\alpha y})_{1\leq x,y\leq n}$. Denote by $k$ the rank of $A$. It is skew-Hermitian, so there exists a unitary matrix $U$ and real numbers $\{ \lambda_1, \ldots , \lambda_n \}$ so that 
$$ A_{xy} = \sqrt{-1} \sum_{t=1}^n  \lambda _t U_{xt} \overline{U_{yt}}. $$
Without loss of generality, we may assume that  $\lambda_{k+1}= \cdots = \lambda_n=0$ while $\lambda_1 \cdots   \lambda_k \neq 0$.  Write
$$ \psi_t = \sum_{x=1}^n U_{xt} \varphi_x. $$
Then $\{ \psi_1, \ldots , \psi_n\}$ forms another coframe, and 
$$ d\varphi_{\alpha} = \lambda_1 \sqrt{-1}\psi_1\wedge \overline{\psi}_1 + \cdots +  \lambda_k \sqrt{-1}\psi_k\wedge \overline{\psi}_k, $$
Hence 
$$ (d\varphi_{\alpha})^k = \lambda_1 \cdots \lambda_k k! \Phi, \ \ \ \ \ \Phi = \sqrt{-1}\psi_1\wedge \overline{\psi}_1 \wedge \cdots \wedge \sqrt{-1}\psi_k\wedge \overline{\psi}_k. $$
This gives us a non-negative $(k,k)$-form $\Phi$ which is $d$-exact and non-trivial, contradicting with the conclusion of Lemma \ref{lemmaCao}. So we know that we must have $d\varphi_{\alpha}= 0$, or equivalently, $D^{\ast}_{\alpha \ast}=0$ for each $\alpha$. This completes the proof of the lemma. 
\end{proof}

Next let us organize all the possibly non-trivial structure constants into matrix form, to simplify our later discussions. Let us denote by
\begin{equation}
C_{\alpha}=(C^j_{i\alpha}), \ \ C_{a}=(C^j_{ia}), \ \ D_{\alpha}=(D^j_{i\alpha}), \ \ D_{a}=(D^j_{ia}), \ \ Z_a=(D^a_{ij})
\end{equation}
the $r\times r$ matrices with $(ij)$-th entry given above, and write
\begin{equation}
v^{y}_x=(D^{y}_{ix}), \ \ w_{xy}=(C^{i}_{xy}), \ \ \ \ \ r\!+\!1\leq x, \,y\leq n, 
\end{equation}
for the column vectors whose $i$-th entry is given above, where $x$ and $y$ can be either $\alpha$ or $a$. By Lemma \ref{restriction1} we have
\begin{equation}
C_{\alpha} =- D_{\alpha}^{\ast}, \ \ \ \ \ w_{\alpha \beta} = \overline{ v^{\beta}_{\alpha} } - \overline{ v^{\alpha}_{\beta} } . 
\end{equation}

\begin{lemma} \label{restriction3}
Let $(M^n,g)$ be a compact Hermitian-symplectic manifold, where $M=G/\Gamma$ is the quotient of a $2$-step solvable Lie group by a discrete subgroup and the complex structure $J$ is left-invariant. Let  $({\mathfrak g}, J,g)$ be the corresponding Hermitian Lie algebra. Then under any admissible frame $e$, the structure constants satisfy
\begin{eqnarray}
&& [C_x, C_y] \,= \,[D_x, D_y] \,=\,0,    \label{eq:C1}\\
&& [C_x^{\ast},D_y ] \, = \, -Z_x \overline{Z}_y, \label{eq:C2} \\
&& D_xZ_a-Z_a\,^t\!C_x \, = \, 0, \label{eq:C3} \\
&& C_x^{\ast}Z_y -Z_y \overline{D}_x \, =\,  C_y^{\ast}Z_x -Z_x \overline{D}_y,      \label{eq:C4}   \\
&& ^t\!C_x w_{yz} + \,^t\!C_y w_{zx} + \,^t\!C_z w_{xy}  \, = \,0 , \label{eq:C5} \\
&& D_x v^y_z - D_z v^y_x  + Z_y w_{xz} \, = \, 0, \label{eq:C6} \\
&& C_x^{\ast}v^z_y - C_z^{\ast}v^x_y + D_y \overline{w}_{xz} + Z_x \overline{v^y_z} - Z_z \overline{v^y_x }\, = \, 0. \label{eq:C7}
\end{eqnarray}
for any $r\!+\!1\leq x, y, z\leq n$.
\end{lemma}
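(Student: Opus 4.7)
The plan is to derive each of the seven matrix-vector identities (\ref{eq:C1})–(\ref{eq:C7}) by specializing the three Jacobi relations in~(\ref{Bianchi}) to suitable index patterns under the convention~(\ref{convention}), and then discarding the many vanishing summands using Lemmas~\ref{restriction1} and~\ref{restriction2}. The key organizing fact is that a non-vanishing $C^t_{uv}$ always has upper index $t\in\{1,\ldots,r\}$ (since $C^\alpha_{\ast\ast}=C^a_{\ast\ast}=0$), while a non-vanishing $D^t_{uv}$ always has first lower index $u\in\{1,\ldots,r\}$ (by $D^\ast_{a\ast}=0$ and $D^\ast_{\alpha\ast}=0$). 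Consequently, whenever the four free Jacobi indices are chosen so that the external $C$'s and $D$'s land in admissible ranges, the internal summation in~(\ref{Bianchi}) collapses to one over $t\in\{1,\ldots,r\}$, and the survivors reassemble into $r\times r$ matrix products in $C_x,D_x,Z_a$ and $\mathbb C^r$-vector products in $v^y_x,w_{xy}$, via the dictionary $(C_x)_{pt}=C^t_{px}$, $(D_x)_{pt}=D^t_{px}$, $(Z_a)_{pq}=D^a_{pq}$, $(v^y_x)_p=D^y_{px}$, $(w_{xy})_p=C^p_{xy}$.

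For (\ref{eq:C1})–(\ref{eq:C4}) I would choose two of the four free Jacobi indices in $\{1,\ldots,r\}$ and two in $\{r\!+\!1,\ldots,n\}$. With $i,\ell\in\{1,\ldots,r\}$ and $j=x,k=y\in\{r\!+\!1,\ldots,n\}$, the first line of~(\ref{Bianchi}) loses its middle summand (three bottom-range indices force $C^\ell_{ti}=0$) and becomes $[C_x,C_y]=0$, while the second line analogously becomes $[D_x,D_y]=0$, proving (\ref{eq:C1}). The third line applied to the same index pattern produces the mixed commutator relation $C_x^\ast D_y-D_yC_x^\ast+Z_x\overline{Z}_y=0$, which is (\ref{eq:C2}). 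For (\ref{eq:C3}) I would specialize the second line of~(\ref{Bianchi}) with the free indices chosen so that the upper-index structure constant is $Z_a$; the surviving terms then produce the compatibility $D_xZ_a=Z_a\,^t\!C_x$ between the two kinds of $D$-matrices. The symmetry (\ref{eq:C4}) comes out of the third line by isolating the part that is manifestly symmetric under swapping the two upper-range indices.

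The cyclic vector identities (\ref{eq:C5})–(\ref{eq:C7}) arise similarly but with three of the four free indices in $\{r\!+\!1,\ldots,n\}$ and only one, $p\in\{1,\ldots,r\}$, in the bottom range. The three lines of~(\ref{Bianchi}) then produce cyclic sums in $x,y,z$: the first line, after using $C^p_{xy}=(w_{xy})_p$ and $C^p_{tx}=(C_x)_{tp}$, gives the $w$-cyclic identity (\ref{eq:C5}); the second line, set up with $j=i$, $i=x$, $k=z$, $\ell=y$, contributes $D^t_{ix}D^y_{tz}=(D_xv^y_z)_i$, $-D^t_{iz}D^y_{tx}=-(D_zv^y_x)_i$, and a surviving $C^t_{xz}D^y_{it}=(Z_yw_{xz})_i$, yielding (\ref{eq:C6}); and the third line, whose $\overline D$ factors produce $\overline w_{xz}$ and $\overline{v^y_\ast}$, yields (\ref{eq:C7}).

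The only substantive obstacle is careful bookkeeping: each specialized line of~(\ref{Bianchi}) carries several summands which must be checked individually against the vanishing patterns of Lemmas~\ref{restriction1} and~\ref{restriction2}. The appeal to Lemma~\ref{restriction2} is essential here, since without the extra vanishing $D^\ast_{\alpha\ast}=0$ the intermediate sums for (\ref{eq:C2}), (\ref{eq:C6}) and (\ref{eq:C7}) would retain additional $v^\ast_\alpha$-type terms that would spoil the clean matrix-vector form of the asserted identities. Once those zero summands are removed and the dictionary above is applied, each Jacobi specialization becomes, term by term, the asserted identity.
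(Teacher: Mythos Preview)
Your proposal is correct and takes essentially the same approach as the paper: both derive (\ref{eq:C1})--(\ref{eq:C7}) by specializing the three Jacobi identities in (\ref{Bianchi}) to suitable index patterns, using the vanishing $C^{\alpha}_{\ast\ast}=C^{a}_{\ast\ast}=0$ and $D^{\ast}_{a\ast}=D^{\ast}_{\alpha\ast}=0$ from Lemmas~\ref{restriction1} and~\ref{restriction2} to collapse each internal sum to the range $\{1,\ldots,r\}$ and then read off the matrix/vector identities via your dictionary. One small bookkeeping caution: the literal assignment $i,\ell\le r$, $j=x,k=y>r$ that yields $[C_x,C_y]=0$ from the first line of (\ref{Bianchi}) makes the second and third lines vanish identically (since every $D$-factor there has first lower index $j>r$); the paper instead takes $j,\ell\le r$ with $i,k>r$ for $[D_x,D_y]=0$ and $j,k\le r$ with $i,\ell>r$ for (\ref{eq:C2}), which is exactly the ``careful bookkeeping'' you flag.
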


\begin{proof}
Note that $Z_{\alpha}=0$ by Lemma \ref{restriction1}. Using the Bianchi identity (\ref{Bianchi}), which is equivalent to $d^2\varphi_t=0$, we can derive the above equations though a  straight-forward computation. We give the details here for readers' convenience. Since $C^{\alpha}_{\ast\ast} = C^a_{\ast\ast}=0$, the first identity in (\ref{Bianchi}) becomes
$$ \sum_{j=1}^r \big( C^i_{jx}C^j_{yz} + C^i_{jy}C^j_{zx} + C^i_{jz}C^j_{xy} \big) = 0, \ \ \ \ \ \forall \ 1\leq x,y,z\leq n. $$ 
If we let $x,y,z$ be all bigger than $r$, then we get (\ref{eq:C5}). If $x,y>r$ but $z=k\leq r$, then we get
$$ \sum_j (- C^i_{jx}C^j_{ky} + C^i_{jy}C^j_{kx}) =0, $$
that is, $C_xC_y-C_yC_x=0$, which is the first half of (\ref{eq:C1}). Similarly, since $D^{\ast}_{\alpha \ast}=D^{\ast}_{a \ast}=0$, the second identity of (\ref{Bianchi}) becomes
$$ \sum_{j=1}^r \big( C^j_{xy}D^z_{ij} + D^j_{ix}D^z_{jy} -  D^j_{iy}D^z_{jx} \big) =0, \ \ \ \ \forall \ 1\leq i\leq r, \ \ \ \forall \ 1\leq x,y,z,\leq n. $$
If both $x$ and $y$ are $\leq r$, then all terms are zero. If $x>r$ but $y=k\leq r$, then we have
$$ \sum_j (- C^j_{kx}D^z_{ij} + D^j_{ix} D^z_{jk} ) =0. $$
It is non-trivial only if $z=a>s$, in which case we get $Z_a\,^t\!C_x = D_xZ_a$, which is (\ref{eq:C3}). When both $x$ and $y$ are $>r$, if $z=k\leq r$, then we get $[D_x, D_y]=0$, which is the second half of (\ref{eq:C1}), and if $z>r$, then we have
$ D_xv^z_y - D_y v^z_x + Z_z w_{xy}=0$, which is (\ref{eq:C6}). Finally, let us examine the third equation in (\ref{Bianchi}), which becomes
$$ \sum_{j=1}^r \big( C^j_{xz}\overline{D^j_{iy} }  - C^i_{jz}\overline{D^x_{jy} }  + C^i_{jx}\overline{D^z_{jy} } - D^y_{jx}\overline{D^z_{ij} } + D^y_{jz}\overline{D^x_{ij} }  \big) =0, $$
where $i\leq r$ and $x,y,z$ are arbitrary. It is trivial when both $x$ and $z$ are $\leq r$. Let $x>r$ and $z=k\leq r$. Then we get
$$ \sum_{j=1}^r \big( C^j_{xk}\overline{D^j_{iy} }    + C^i_{jx}\overline{D^k_{jy} }  + D^y_{jk}\overline{D^x_{ij} }  \big) =0. $$
It is trivial when $y\leq r$, while when $y>r$ it is $-\overline{D}_y\,^t\!C_x + \,^t\!C_x \overline{D}_y + \overline{Z}_xZ_y =0$. After taking complex conjugation, it is just (\ref{eq:C2}).  Now suppose both $x$ and $z$ are $>r$. If $y=k\leq r$, then we get
$$ \sum_{j=1}^r \big( 0  - C^i_{jz}\overline{D^x_{jk} }  + C^i_{jx}\overline{D^z_{jk} } - D^k_{jx}\overline{D^z_{ij} } + D^k_{jz}\overline{D^x_{ij} }  \big) =0, $$
that is, $-^t\!C_z\overline{Z}_x + \,^t\!C_x\overline{Z}_z - \overline{Z}_z D_x + \overline{Z}_x D_z = 0$, which after conjugation is simply (\ref{eq:C4}). If $y>r$, on the other hand, then we get 
$$ \overline{D}_y w_{xz} -  \,^t\!C_z\overline{v^x_y} + \,^t\!C_x\overline{v^z_y} - \overline{Z}_z v^y_x + \overline{Z}_x v^y_z = 0, $$
which after conjugation is just (\ref{eq:C7}).  This completes the proof of the lemma.
\end{proof}

Next we will use Lemma \ref{lemmaGuo} to derive further restrictions on the structure constants $C$ and $D$ when the metric is assumed to be Hermitian-symplectic. We have the following:

\begin{lemma} \label{restriction4}
Let $(M^n,g)$ be a compact Hermitian-symplectic manifold, where $M=G/\Gamma$ is the quotient of a $2$-step solvable Lie group by a discrete subgroup and the complex structure $J$ is left-invariant. Let  $({\mathfrak g}, J,g)$ be the corresponding Hermitian Lie algebra. Then under any admissible frame $e$, there exists a skew-symmetric matrix $S$ so that the following hold:
\begin{eqnarray}
&& \langle u_x,\overline{v^z_y} \rangle \,= \,\langle u_z,\overline{v^x_y} \rangle   \label{eq:D1}\\
&& S' \overline{v^x_y} + D_y^{\ast} u_x  = \, \frac{\sqrt{-1}}{2} v^y_x , \label{eq:D2} \\
&& \overline{Z}_x u_y -  \overline{Z}_y u_x    \, = \, \frac{\sqrt{-1}}{2} w_{xy}, \label{eq:D3} \\
&& C_x +D_x  \, = \, -2\sqrt{-1} S'\overline{Z}_x, \label{eq:D4} \\
&& \,^t\!Z_x - Z_x \, = \, 2\sqrt{-1} (D^{\ast}_x S' + S' \overline{D}_x) , \label{eq:D5} \\
&& S'\,^t\!C_x + C_x S' \, = \, 0 , \label{eq:D6} \\
&& C_xu_y - C_yu_x - S' w_{xy} \, = \, 0. \label{eq:D7} \\
&& \langle u_x, w_{yz} \rangle   + \langle u_y, w_{zx} \rangle  + \langle u_z, w_{xy} \rangle  \, = \, 0. \label{eq:D8}
\end{eqnarray}
for any $r\!+\!1\leq x, y, z\leq n$. Here we have written $S'=(S_{ij})$ as a $r\times r$ (skew-symmetric) matrix and $u_x=(S_{ix})$ as a column vector in ${\mathbb C}^r$. 
\end{lemma}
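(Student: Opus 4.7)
The plan is to invoke Lemma \ref{lemmaGuo} directly: the Hermitian-symplectic hypothesis yields a skew-symmetric $n\times n$ matrix $S=(S_{xy})$ satisfying the two identities in (\ref{eq:HS}). I then set $S'=(S_{ij})_{i,j\leq r}$ and $u_x=(S_{ix})_{i\leq r}$ for $r<x\leq n$, and show that (\ref{eq:D1})--(\ref{eq:D8}) fall out by expanding (\ref{eq:HS}) case by case according to the index-range convention (\ref{convention}). The remaining blocks $(S_{\alpha\beta})$, $(S_{\alpha a})$, $(S_{ab})$ of $S$ enter intermediate steps but do not appear in the final identities.

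A preliminary observation is that in any admissible frame the metric matrix is block-diagonal, $g=\mathrm{diag}(I_r,\,h,\,I_{n-s})$ with $h=(g_{\alpha\bar\beta})$ a positive-definite Hermitian $(s-r)\times(s-r)$ matrix; the off-diagonal blocks $\langle e_i,\overline{e}_\alpha\rangle$, $\langle e_i,\overline{e}_a\rangle$ and $\langle e_\alpha,\overline{e}_a\rangle$ all vanish because ${\mathfrak g}'_J$, $V$, $W$ are pairwise $g$-orthogonal, ${\mathfrak g}'_J$ and $W$ are $J$-invariant, and $g$ is $J$-compatible. Consequently $g^{-1}$ is also block-diagonal, which makes the middle block $h$ drop out of most of the contractions appearing in the torsion formula of Lemma \ref{lemma1}.

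The first identity of (\ref{eq:HS}) expands straightforwardly: since $C^\ast_{ij}=C^\alpha_{\ast\ast}=C^a_{\ast\ast}=0$ by Lemma \ref{restriction1}, only $m\leq r$ contributes to the inner sum on the LHS and at most one of $i,j,k$ lies in $\{1,\ldots,r\}$. The three non-trivial patterns --- two indices $\leq r$ with one $>r$, one index $\leq r$ with two $>r$, and all three $>r$ --- produce precisely (\ref{eq:D6}), (\ref{eq:D7}), (\ref{eq:D8}). The second identity of (\ref{eq:HS}) is more delicate, since its right-hand side invokes the Chern torsion. Using Lemma \ref{lemma1} together with the block form of $g$ and the vanishing $D^\ast_{\alpha\ast}=0$ from Lemma \ref{restriction2}, I would first verify that the surviving torsion components can be written, up to signs, through $C_x+D_x$, $\,^t\!Z_a-Z_a$, $w_{xy}$, $v^a_x$, and an $h$-weighted sum $\sum_\beta v^\beta_x\,h^{\bar\beta\gamma}$; in each of these, the only contributions that would multiply $h^{-1}$ are those killed by $D^\ast_{\alpha\ast}=0$.

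With the torsion in hand, substituting into the second identity of (\ref{eq:HS}) and case-splitting on the ranges of $(i,j,k)$ yields each of the remaining identities: $(p,q,a)$ with $p,q\leq r$ and $a>s$ yields (\ref{eq:D4}); $(p,a,q)$ yields (\ref{eq:D5}); $(p,y,x)$ with $p\leq r$ and $x,y>r$ yields (\ref{eq:D2}) (the $h$-weighted combination $\sum_\gamma T^\gamma_{px}\,h_{\gamma\bar\beta}$ collapses via $h^{-1}h=I$); $(x,y,z)$ with $x,z>r$ and $y>r$ yields (\ref{eq:D1}); and $(x,p,z)$ with $x,z>r$ and $p\leq r$ yields (\ref{eq:D3}). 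The subcases involving $\alpha\in(r,s]$ either reduce to the above or hold trivially thanks to $Z_\alpha=0$. The principal obstacle is purely combinatorial --- tracking the many index-range patterns and confirming the clean cancellation of the $h$-contributions on both sides of the second identity --- but once this bookkeeping is organized via Lemmas \ref{restriction1} and \ref{restriction2}, the identities (\ref{eq:D1})--(\ref{eq:D8}) reduce to matrix algebra on the ${\mathfrak g}'_J$ block with no further geometric input.
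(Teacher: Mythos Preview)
Your proposal is correct and follows essentially the same approach as the paper: invoke Lemma \ref{lemmaGuo}, use the vanishing patterns from Lemmas \ref{restriction1}--\ref{restriction2} to reduce the inner sums in (\ref{eq:HS}) to $m\leq r$, and then case-split on the index ranges of the three free indices to read off (\ref{eq:D1})--(\ref{eq:D8}). One minor inaccuracy: the blocks $(S_{\alpha\beta}),(S_{\alpha a}),(S_{ab})$ never enter even intermediately, since $C^m_{\ast\ast}=0$ and $D^{\ast}_{m\ast}=0$ for every $m>r$ kill those terms outright; otherwise your bookkeeping (including the $h^{-1}h=I$ collapse for the middle block) matches the paper's computation.
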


\begin{proof}
By Lemma \ref{lemmaGuo}, we know that there will be a skew-symmetric matrix $S$ so that the two identities hold. Let us start with the second identity
\begin{equation} \label{HS2}
 \sum_{j=1}^r \big( S_{jx} \overline{D^z_{jy}} - S_{jz} \overline{D^x_{jy}} \big) = \frac{\sqrt{-1}}{2} \sum_{t=1}^n T^t_{xz} g_{t\bar{y}} =  \frac{\sqrt{-1}}{2} \sum_{j=1}^r \big( - C^j_{xz}g_{j\bar{y}} - D^y_{jz} g_{x\bar{j}} +  D^y_{jx} g_{z\bar{j}} \big) ,
 \end{equation}
for any $1\leq x,y,z\leq n$. Here we have used the formula in Lemma \ref{lemma1} which gives the Chern torsion components by structure constants under a frame $e$ which is not necessarily unitary. Note that for our admissible frame $e$ here, the matrix of metric $(g_{x\bar{y}})$ is block-diagonal in the three blocks corresponding to the decomposition of ${\mathfrak g}$ into three summands. The first and third block of $g$ are both the identity matrix, but the middle block $(g_{\alpha\bar{\beta}})$ might not be the identity matrix, as we chose $e$ so that $\{ e_{\alpha}+\overline{e}_{\alpha}\, \mid \, r<\alpha \leq s\}$ spans $V$, and $V$ might not be perpendicular to $JV$.  So at most two elements in $\{ x,y,z\}$  can be $\leq r$, otherwise the equation holds trivially. We divide the discussion into the following cases:

\vspace{0.1cm}
 
{\em Case 1.}   If  $x$, $y$, $z$ are all $>r$. 

\vspace{0.1cm}

In this case  (\ref{HS2}) becomes $
 \sum_{j} \big( S_{jx} \overline{D^z_{jy}} - S_{jz} \overline{D^x_{jy}} \big) =  0$, or equivalently $\langle u_x, \overline{v^z_y} \rangle = \langle u_z, \overline{v^x_y} \rangle $, which is (\ref{eq:D1}).

\vspace{0.1cm}
 
{\em Case 2.}   If exactly two of  $x$, $y$, $z$ are $>r$. 

\vspace{0.1cm}

If $x,y> r$ and $z=k\leq r$, then (\ref{HS2}) becomes $
 \sum_{j} \big( S_{jx} \overline{D^k_{jy}} - S_{jk} \overline{D^x_{jy}} \big)  =  \frac{\sqrt{-1}}{2}  D^y_{kx} 
$, which is (\ref{eq:D2}). If $x,z> r$, $y=k\leq r$, then (\ref{HS2}) becomes  $
 \sum_{j} \big( S_{jx} \overline{D^z_{jk}} - S_{jz} \overline{D^x_{jk}} \big)  =  - \frac{\sqrt{-1}}{2}   C^k_{xz}$, or (\ref{eq:D3}). 
 
 \vspace{0.1cm}
 
{\em Case 3.}   Only one of  $x$, $y$, $z$ is $>r$. 

\vspace{0.1cm}
 
First assume that $x>r$, $y=i\leq r$, $z=k\leq r$. Then (\ref{HS2}) becomes
$$ \sum_j \big( 0 - S_{jk} \overline{D^x_{ji}} \big) =   \frac{\sqrt{-1}}{2} \big( - C^i_{xk} + D^i_{kx} \big) , $$
That is, $S'\overline{Z}_x = \frac{\sqrt{-1}}{2} (C_x+D_x)$, so we get (\ref{eq:D4}).  If $y>r$, $x=i\leq r$ and $z=k\leq r$, then (\ref{HS2}) turns into
$
 \sum_j \big( S_{ji} \overline{D^k_{jy}} - S_{jk} \overline{D^i_{jy}} \big) =  \frac{\sqrt{-1}}{2}  (  - D^y_{ik}  +  D^y_{ki}  ) $, or $-S'\overline{D}_y -D^{\ast}_y S' = \frac{\sqrt{-1}}{2}  (\,^t\!Z_y-Z_y)$, which is (\ref{eq:D5}). 
 
If $x,y,z$ are all $\leq r$, then the equation (\ref{HS2}) is trivial, so we have exhausted all the possibilities for (\ref{HS2}). Next let us examine the first equation in Lemma \ref{lemmaGuo}, which in our case becomes
\begin{equation} \label{HS1}
\sum_{j=1}^r \big( S_{jx}C^j_{yz} + S_{jy} C^j_{zx}  + S_{jz} C^j_{xy} \big) =0, \ \ \ \ \ \forall \ 1\leq x,y,z\leq n.
\end{equation}

If all $x,y,z \leq r$, then the above equation is trivial. If $x>r$, $y=i\leq r$ and $z=k\leq r$, then the above equation reads $\sum_j (S_{ji}C^j_{kx} + S_{jk}C^j_{xi}) =0$, which is (\ref{eq:D6}). If $x,y>r$ and $z=i\leq r$, then (\ref{HS1}) becomes $\sum_j (S_{jx}C^j_{yi} + S_{jy}C^j_{ix} + S_{ji}C^j_{xy})=0$, which is (\ref{eq:D7}). Finally, if all $x,y,z$ are $>r$, then we get (\ref{eq:D8}). This completes the proof of the lemma. 
\end{proof}

From these matrix equations, we want to deduce the characterizations for Hermitian-symplectic metrics on $2$-step solvable Lie algebras, so we can eventually confirm Streets-Tian Conjecture in this case.

\vspace{0.2cm}

{\em Claim 1.} $Z_a=0$ for all $s\!+\!1\leq a\leq n$.


\begin{proof}
We already have $Z_{\alpha}=0$ by Lemma \ref{restriction1}. To show that $Z_a=0$, let us fix $a$, and multiply $\overline{Z}_a$ from the right onto (\ref{eq:D5}) for $x=a$, and skip the common subscript $a$, we get
\begin{eqnarray*}
 ^t\!Z\overline{Z} - Z\overline{Z}  & = &  2i(D^{\ast}S'+S'\overline{D})\overline{Z} \ \, = \ \, 2iD^{\ast}S'\overline{Z} +  2iS'\overline{(DZ)}    \\
 & = &  2iD^{\ast}S'\overline{Z} +  2iS' \overline{Z\,^t\!C} \ \, = \ \, D^{\ast}(2iS'\overline{Z}) + (2iS'\overline{Z}) C^{\ast} \\
 & = & - D^{\ast} (C+D) - (C+D)C^{\ast} .
 \end{eqnarray*}
Here at the second line we used (\ref{eq:C3}) and at the third line we used (\ref{eq:D4}). By (\ref{eq:C2}) for $x=y=a$, we have $C^{\ast}D - D C^{\ast} = - Z \overline{Z}$. Plug it into the above equation we obtain
$$ ^t\!Z\overline{Z} + C^{\ast}D - D C^{\ast}  = - D^{\ast} (C+D) - (C+D)C^{\ast} , $$
 or equivalently, 
$$  ^t\!Z\overline{Z} + (C+D)^{\ast} (C+D) = [ C^{\ast}, C]. $$
Taking trace on both sides, we conclude that $Z=0$ and $C+D=0$. This proves the claim.
 \end{proof}

\vspace{0.1cm}

{\em Claim 2.} $w_{xy}=0$ for all $r\!+\!1 \leq x,y \leq n$.


\begin{proof}
Since we already proved that $Z=0$, by (\ref{eq:D3}) we get $w_{xy}=0$ for all $x$, $y$.
\end{proof}

In summary, we know that for any $r\!+\!1 \leq x \leq n$, $C_x=-D_x$ and all $D_x$ are normal and commute with each other. So by a unitary change of $\{ e_1, \ldots , e_r\}$ we may assume that all $D_x$ are diagonal, which we will assume from now on. We already know that all $w_{xy}=0$, while $D$ and $v^y_x$ satisfy the following conditions from Lemma \ref{restriction3} and Lemma \ref{restriction4}:
\begin{equation} \label{summary}
\left\{ \begin{split} 
S' \overline{ v^x_y} + D^{\ast}_y u_x = \frac{i}{2} v^y_x,   \hspace{2.95cm}  \\
D_xS' + S'\,^t\!D_x =0, \ \ \ D_x^{\ast}S' + S'\overline{D}_x=0, \\
D_x v^y_z = D_z v^y_x , \ \ \ D^{\ast}_x v^z_y = D_z^{\ast} v^x_y , \hspace{1.27cm}\\
D_x u_y = D_y u_x, \ \ \ \langle u_x, \overline{ v^z_y} \rangle = \langle u_z, \overline{ v^x_y} \rangle.   \hspace{0.55cm} \\
D_{\alpha}^{\ast}=D_{\alpha}, \ \ \ v^{\alpha}_{\beta} = v^{\beta}_{\alpha},  \hspace{2.82cm}
\end{split} \right. 
\end{equation}
for any $r\!+\!1 \leq x,y,z \leq n$ and any $r\!+\!1 \leq \alpha, \beta \leq s$. Note that the last line of (\ref{summary}) was due to the fact that $C^i_{j\alpha}=-\overline{ D^j_{i\alpha} } $ and $C^j_{\alpha\beta} = \overline{ D^{\beta}_{j\alpha} } - \overline{ D^{\alpha}_{j\beta} }$.

\vspace{0.1cm}

{\em Claim 3.} For any $r\!+\!1 \leq x \leq n$, both $D_x$ and $D_x^{\ast}$ commute with $S'^{\ast}S'$. 


\begin{proof}
Since all our $D_x$ are diagonal now, the second line of (\ref{summary}) reads
$$ DS'=- S'D, \ \ \ \overline{D}S' = - S'\overline{D}. $$
Since $S'$ is skew-symmetric, we have
$$ D S'^{\ast}S'= - D  \overline{S'} S' = (- \overline{ \overline{D} S'}) S' = (\overline{S'\overline{D}}) S' = \overline{S'}(DS')=   \overline{S'}(-S' D) =  S'^{\ast}S'D. $$
Similarly, $D^{\ast} S'^{\ast}S' =  S'^{\ast}S'D^{\ast}$. This proves the claim.
\end{proof}

Let us denote by $I_x$ the image space of $D_x$. The next claim says that each $v^x_y$ lives in the intersection $I_x\cap I_y$.

\vspace{0.1cm}

{\em Claim 4.} For any $r\!+\!1 \leq x,y \leq n$, $v^x_y$ belongs to $I_x\cap I_y$. 


\begin{proof}
Since all $D_x$ are diagonal, the equation $D_xu_y=D_yu_x$ implies that $D_xu_y \in I_y$. Hence $D_x\overline{u}_y\in I_y$ as well. Let us write $D_x\overline{u}_y=D_y\xi$ for some column vector $\xi \in {\mathbb C}^r$. Multiplying $S'^{\ast}$ from the left side onto the first equation of (\ref{summary}), we get
\begin{eqnarray*}
S'^{\ast}S' \overline{v^x_y} & = & S'^{\ast} \big( \frac{i}{2} v^y_x - D^{\ast}_y u_x \big) \ \, = \ \, - \frac{i}{2} \,\overline{ ( S' \overline{v^y_x })}  - S'^{\ast} D^{\ast}_y u_x   \\
& = &  - \frac{i}{2} \,\overline{ (\frac{i}{2} v^x_y - D^{\ast}_x u_y   ) }  + D^{\ast}_y S'^{\ast}  u_x  \\
& = & - \frac{1}{4} \,\overline{  v^x_y} + \frac{i}{2} D_x \overline{u}_y + D^{\ast}_y S'^{\ast}  u_x \\
& = & - \frac{1}{4} \,\overline{  v^x_y} + \frac{i}{2} D_y \xi + D^{\ast}_y S'^{\ast}  u_x.
\end{eqnarray*}
Therefore, 
$$ (\frac{1}{4}I+ S'^{\ast}S' ) \overline{v^x_y} =  \frac{i}{2} D_y \xi + D^{\ast}_y S'^{\ast}  u_x. $$
By Claim 3, the positive definite matrix $(\frac{1}{4}I+ S'^{\ast}S' )$, hence its inverse, commute with $D_y$ and $D_y^{\ast}$, thus
$$ \overline{v^x_y} = D_y \{ \frac{i}{2} (\frac{1}{4}I+ S'^{\ast}S' )^{-1}\xi \} + D_y^{\ast} \{ (\frac{1}{4}I+ S'^{\ast}S' )^{-1}S'^{\ast}  u_x\}. $$
Thus $v^x_y\in I_y$. Similarly, $v^x_y\in I_x$, so the claim is proved. 
\end{proof}

\vspace{0.1cm}

{\em Claim 5.} There exist $\xi_x \in {\mathbb C}^r$ such that $v^x_y = D_y \xi_x$ for all $r\!+\!1 \leq x,y \leq n$. 


\begin{proof} 
Consider the function $\sigma : {\mathbb C}\rightarrow {\mathbb C}$ defined by $\sigma (0)=0$ and $\sigma (z)=z^{-1}$ for $z\neq 0$. Write 
$$ D_x = \mbox{diag} \{ \lambda_{x1}, \ldots , \lambda_{xr}\}, \ \ \ \ \  D_x^{\sigma} = \mbox{diag} \{ \sigma(\lambda_{x1}), \ldots , \sigma(\lambda_{xr}) \}. $$
Then clearly we have $D^{\sigma}_x D_x w = w$ for any vector $w\in I_x \subseteq {\mathbb C}^r$. For any $r\!+\!1 \leq x \leq n$, write $ \xi_x = D^{\sigma}_x (v^x_x)$. Then by the third line of (\ref{summary}) we have
$$ D_y \xi_x = D_y D^{\sigma}_x (v^x_x) = D^{\sigma}_x D_y v^x_x =  D^{\sigma}_x D_x v^x_y = v^x_y, $$
where the last equality holds as $v^x_y \in I_x$ by Claim 4. This proves the claim.
\end{proof}

Now we are ready to prove the main result of this article, Theorem \ref{thm}:

\begin{proof}[{\bf Proof of Theorem \ref{thm}.}] 
Let $(M^n,g)$ be a compact Hermitian-symplectic manifold such that $M=G/\Gamma$ is the quotient of a $2$-step solvable Lie group by a discrete subgroup, and the complex structure $J$ of $M$, when lifted onto $G$, is left-invariant. By the averaging lemma of Fino and Grantcharov \cite{FG}  (see \cite[Lemma 3.2]{EFV}), we know that the complex manifold $M$ admits a Hermitian-symplectic metric which when lifted to $G$  is left-invariant. We will still use $g$ to denote this left-invariant Hermitian-symplectic metric. This gives a Hermitian structure on the Lie algebra of $G$, denoted as $({\mathfrak g}, J, g)$, where ${\mathfrak g}$ is $2$-step solvable and $g$ is Hermitian-symplectic. Choose an admissible frame $e$, then the structure constants $C$ and $D$ satisfies all the restrictions given by Lemma \ref{restriction1}, Lemma \ref{restriction2}, Lemma \ref{restriction3}, and Lemma \ref{restriction4}, as well as Claims 1 through 5. In short, the only possibly non-trivial components of $C$ and $D$ are $D_x=-C_x$ which are all normal and commuting to each other, and $v^y_x$, for $r\!+\!1\leq x,y\leq n$, obeying all the conditions in (\ref{summary}). As before, we will modify our unitary basis $\{ e_1, \ldots , e_r\}$ for ${\mathfrak g}'_J$ so that all $D_x$ are diagonal. Write
$$ D^j_{ix} = \lambda_{xi}\delta_{ij}, \ \ \ \xi_x =\,^t\!(\xi_{x1}, \ldots , \xi_{xr}), \ \ \ \ \ 1\leq i,j\leq r, \ \ r\!+\!1 \leq x\leq n. $$
Note that for each $i$, the vector $t_i=(\lambda_{xi})_{x=r\!+\!1}^n$ must be non-zero (for the same reason, these vectors $\{ t_1, \ldots , t_r\}$ must be linearly independent), as otherwise we would have 
$$D^i_{\ast \ast} = D_{i\ast}^{\ast} = D^{\ast}_{\ast i}=0, \ \ \ C^i_{\ast\ast} = C^{\ast}_{i\ast}=0, $$
which would contradict with the fact that $e_i+\overline{e}_i \in {\mathfrak g}'$. By the second equation of the third line of (\ref{summary}), for any $r\!+\!1\leq x,y,z\leq n$, we have $D_x^{\ast}v^z_y = D_z^{\ast}v^x_y$, or equivalently, $D_x^{\ast}D_y\xi_z = D_z^{\ast}D_y\xi_x$, that is
$$ \overline{\lambda}_{xi} \lambda_{yi} \xi_{zi} =  \overline{\lambda}_{zi} \lambda_{yi} \xi_{xi}, \ \ \ \ \  \forall \ 1\leq i\leq r. $$
Fix any $i$, and let $y$ exhaust all indices from $r+1$ to $n$, since $t_i\neq 0$, we must have
$$ \overline{\lambda}_{xi}  \xi_{zi} =  \overline{\lambda}_{zi} \xi_{xi}, \ \ \ \ \  \forall \ 1\leq i\leq r, \ \ \forall 
\ r\!+\!1 \leq x,z\leq n. $$
Thus the vector $\tau_i = (\xi_{xi})_{x=r+1}^n$ must be proportional to $\overline{t}_i$. Let us write 
$ \xi_{xi} = \overline{p}_i \overline{\lambda}_{xi}$ for some constant $p_i$, then we have
\begin{equation*}
 D^x_{iy} = (v^x_y)_i = \lambda_{yi}\xi_{xi} = \overline{p}_i  \lambda_{yi} \overline{ \lambda}_{xi} .
 \end{equation*}
From the structure equation (\ref{eq:structure}) and (\ref{summary}), we get $d\varphi_x=0$ for each $r+1\leq x\leq n$, and 
\begin{eqnarray}
d\varphi_i & = & -\sum_{x=r+1}^n  C^i_{ix}\varphi_i\varphi_x - \sum_{x=r+1}^n \overline{D^i_{ix}}\varphi_i\overline{\varphi}_x - \sum_{x,y=r+1}^n \overline{D^y_{ix}}\varphi_y\overline{\varphi}_x \nonumber \\
& = & \varphi_i \sum_{x=r+1}^n ( \lambda_{xi}\varphi_x-  \overline{\lambda}_{xi}\overline{\varphi}_x ) - \sum_{x,y=r+1}^n\overline{\lambda}_{xi} p_i\lambda_{yi} \varphi_y\overline{\varphi}_x \nonumber  \\
& = & \varphi_i (\sigma_i -\overline{\sigma}_i ) - p_i \sigma_i \overline{\sigma}_i,  \label{dvarphi}
\end{eqnarray}
where $\sigma_i =  \sum_{x=r+1}^n \lambda_{xi}\varphi_x$. Recall that the K\"ahler form $\omega$ of our metric $g$ is
\begin{equation} \label{eq:Kahler}
 \omega = \sqrt{-1}\sum_{i=1}^r \varphi_i \overline{\varphi}_i + \sqrt{-1}\!\sum_{\alpha ,\beta =r+1}^s \! g_{\alpha \bar{\beta}} \,\varphi_{\alpha} \overline{\varphi}_{\beta}  + \sqrt{-1}\!\sum_{a=s+1}^n \! \varphi_a \overline{\varphi}_a . 
 \end{equation}
The formula (\ref{dvarphi}) yields that
$$ d\omega = - \sqrt{-1}\sum_{i=1}^r (\overline{p}_i\varphi_i + p_i \overline{\varphi}_i) \sigma_i \overline{\sigma}_i, $$
which is not zero in general. In other words, the Hermitian-symplectic metric $g$ may not be K\"ahler in general.  Consider the following set of linearly independent $(1,0)$-forms:
\begin{equation*}
\psi_i = \varphi_i + p_i \sigma_i, \ \ \ 1\leq i\leq r; \ \ \ \mbox{and} \ \ \ \psi_x =\varphi_x,\ \ \ r+1\leq x\leq n.  
\end{equation*}
They give us a positive $(1,1)$-form
\begin{equation*}
\tilde{\omega} =\sqrt{-1}\sum_{i=1}^r \psi_i \overline{\psi}_i + \sqrt{-1}\sum_{\alpha ,\beta =r+1}^s g_{\alpha \bar{\beta}} \,\psi_{\alpha} \overline{\psi}_{\beta}  + \sqrt{-1}\sum_{a=s+1}^n \psi_a \overline{\psi}_a 
\end{equation*}
So $\tilde{\omega}$ becomes the K\"ahler form of a new Hermitian metric $\tilde{g}$ on $({\mathfrak g}, J)$. We have $d\psi_x=0$ for any $r+1\leq x\leq n$, while for any $1\leq i\leq r$, 
\begin{eqnarray*}
d(\psi_i \overline{\psi}_i) & = & d\psi_i \,\overline{\psi}_i - \psi_i \,\overline{d\psi_i} \ \, = \ \, d\varphi_i \,\overline{\psi}_i - \psi_i \,\overline{d\varphi_i} \\
& = & \big( \varphi_i (\sigma_i-\overline{\sigma}_i ) - p_i \sigma_i \overline{\sigma}_i \big) ( \overline{\varphi}_i  + \overline{p}_i \overline{\sigma}_i ) - (\varphi_i + p_i \sigma_i) \,  \big( -\overline{\varphi}_i (\sigma_i-\overline{\sigma}_i ) + \overline{p}_i \sigma_i \overline{\sigma}_i \big) \\
& = & 0.
\end{eqnarray*}
Therefore we have $d\tilde{\omega}=0$ and $\tilde{g}$ is K\"ahler. This shows that the presence of a Hermitian-symplectic metric on ${\mathfrak g}$ leads to the existence of K\"ahler metric, so Streets-Tian Conjecture holds for compact quotients of all $2$-step solvable groups. 
\end{proof}

Note that from the proof above, we get an explicit description of Hermitian-symplectic metrics on $({\mathfrak g},J)$, namely those where the structure equations can be given as
$$ d\varphi_i = \varphi_i (\sigma_i - \overline{\sigma}_i ) - p_i \sigma_i \overline{\sigma}_i,   \ \ (1\leq i\leq r); \ \ \ \ \ \ 
d\varphi_x = 0, \ \ (r+1\leq x\leq n). $$
where $\sigma_i = \sum_{x=r+1}^n \lambda_{xi}\varphi_x$, and the K\"ahler form is given by (\ref{eq:Kahler}). Also, the last line in (\ref{summary}) indicates that $\lambda_{\alpha i}$ is real for each $\alpha$ and each $i$, and when $J{\mathfrak g}'\neq {\mathfrak g}'$, namely when there are $e_{\alpha}$ terms, each $p_i$ is also real. One can write down explicit examples of $2$-step solvable Lie algebras equipped with Hermitian-symplectic metrics.

\vspace{0.3cm}

\vs

\noindent\textbf{Acknowledgments.} The second named author would like to thank Bo Yang and Quanting Zhao for their interests and/or helpful discussions.

\vs

\end{document}